\documentclass[12pt]{amsart}

\usepackage[active]{srcltx}

\makeatletter \@addtoreset{equation}{section}
\@addtoreset{figure}{section}

\makeatother

\usepackage{xypic,amscd}
\usepackage{amsmath}
\usepackage{amssymb}
\usepackage{latexsym}
\binoppenalty=10000 \relpenalty=10000
\parskip = 3pt
\parindent = 0.4cm

\newcommand{\mult}{\operatorname{mult}}

\newcommand{\Supp}{\operatorname{Supp}}
\newcommand{\Spec}{\operatorname{Spec}}

\newcommand{\OOO}{{\mathcal O}}
\newcommand{\PP}{\mathbb{P}}

\newcommand{\CC}{\mathbb{C}}
\newcommand{\QQ}{\mathbb{Q}}
\newcommand{\AAA}{\mathbb{A}}

\newtheorem{theorem}[equation]{Theorem}

\newtheorem{lemma}[equation]{Lemma}

\theoremstyle{definition}

\newtheorem{definition}[equation]{Definition}

\newtheorem{remark}[equation]{Remark}

\theoremstyle{remark}

\date{}

\begin{document}

\title[]{Cylinders in del Pezzo surfaces with du Val singularities}
\author[]{Grigory Belousov}
%\thanks{The work was partially supported by grant N.Sh.-1987.2008.1}
\maketitle

\begin{abstract}
We consider del Pezzo surfaces with du Val singularities. We'll prove that a del Pezzo surface $X$ with du Val singularities has a $-K_X$-polar cylinder if and only if there exist tiger such that the support of this tiger does not contain anti-canonical divisor. Also we classify all del Pezzo surfaces $X$ such that $X$ has not any cylinders.
\end{abstract}

\section{intoduction}

A \emph{log del Pezzo surface} is a projective algebraic surface $X$
with only quotient singularities and ample anti-canonical divisor
$-K_{X}$. In this paper we assume that $X$ has only du Val singularities and we work over complex number field $\CC$. Note that a del Pezzo surface with only du Val singularities is rational.

\begin{definition}
Let $X$ be a proper normal variety. Let $D$ be an effective $\QQ$-divisor on $X$ such that $D\equiv -K_X$ and the pair $(X,D)$ is not log canonical. Such divisor $D$ is called \emph{non-log canonical special tiger} (see {\cite{KeM}}).
\end{definition}

\begin{remark}
In this paper, a non-log canonical special tiger we will call a tiger.
\end{remark}

\begin{definition}[see. {\cite{KPZ}}]
Let $M$ be a $\QQ$-divisor on a projective normal variety $X$. An \emph{$M$-polar cylinder} in $X$ is an open subset $U=X\backslash\Supp(D)$ defined by an effective $\QQ$-divisor $D$ in the $\QQ$-linear equivalence class of $M$ such that $U\cong Z\times\AAA^1$ for some affine variety $Z$.
\end{definition}

In this paper, we consider del Pezzo surfaces with du Val singularities over complex number field $\CC$. Our interest is a connection between existence of a $-K_X$-polar cylinder in the del Pezzo surface and tigers on this surface.

The existence of a $H$-polar cylinder in $X$  is important due to the following fact.

\begin{theorem}[see {\cite{KPZ1}}, Corollary 3.2]
Let $Y$ be a normal algebraic variety over $\CC$ projective over an affine
variety $S$ with $\dim_S Y\geq 1$. Let $H\in  Div(Y)$ be an ample divisor on $Y$, and let $V =
\Spec A(Y,H)$ be the associated affine quasicone over $Y$. Then $V$ admits an effective
$G_a$-action if and only if $Y$ contains an $H$-polar cylinder.
\end{theorem}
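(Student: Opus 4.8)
The plan is to translate, in both directions, between $\mathbb{G}_{a}$-actions on the affine quasicone $V=\Spec A$, where $A:=A(Y,H)=\bigoplus_{m\ge 0}H^{0}(Y,\OOO_{Y}(mH))$, and the geometry of $(Y,H)$. After replacing $H$ by a sufficiently divisible multiple --- which replaces $V$ by a quotient by a finite cyclic group and affects neither the existence of a $\mathbb{G}_{a}$-action (actions of a connected group lift and descend along such quotients) nor, since cylinders are defined up to $\QQ$-linear equivalence, the existence of an $H$-polar cylinder --- I may assume $H$ is Cartier and effective, so that $A$ is a finitely generated graded $H^{0}(Y,\OOO_{Y})$-algebra and the grading defines a $\mathbb{G}_{m}$-action on $V$ with fixed locus the vertex section $\Spec A_{0}$; over its complement $V\to Y$ is a Seifert $\mathbb{G}_{m}$-fibration which over an affine open trivializing $\OOO_{Y}(H)$ is a product with $\mathbb{G}_{m}$. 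A reduction used in both implications is that it suffices to treat $\mathbb{G}_{m}$-normalized $\mathbb{G}_{a}$-actions: a $\mathbb{G}_{a}$-action is a nonzero locally nilpotent derivation $\partial$ of $A$, and conjugating $\partial$ by $\mathbb{G}_{m}$ and letting the parameter tend to $0$ or $\infty$ degenerates it to a nonzero extreme graded component $\partial_{d}$, again locally nilpotent, now homogeneous of some degree $d\in\ZZ$.

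I would dispose of the implication ``$\mathbb{G}_{a}$-action $\Rightarrow$ cylinder'' first, as it is essentially formal. Let $\partial$ be homogeneous of degree $d$. Since $\dim_{S}Y\ge 1$, the transcendence degree of $\ker\partial$, which equals $\dim V-1$, is positive, so $\ker\partial$ contains homogeneous elements of positive degree; fix a homogeneous pre-slice $r$ (so $\partial r\ne 0$, $\partial^{2}r=0$) with $\partial r$ of positive degree, and choose finitely many homogeneous $b_{1},\dots,b_{k}\in\ker\partial$ whose degrees have greatest common divisor $g$, the gcd of all degrees occurring in $\ker\partial$. Put $s:=b_{1}\cdots b_{k}\cdot\partial r$ and $R:=(\ker\partial)_{s}$, a $\ZZ$-graded ring possessing units in every degree divisible by $g$. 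By the slice theorem $A_{s}=R[\,w\,]$ with $w:=r/\partial r$ of degree $-d$, and a short computation shows the degree-$0$ subring $(R[\,w\,])_{0}$ is the polynomial ring $R_{0}[\,p\,w^{g'}\,]$, where $g'=g/\gcd(g,d)$ and $p\in R_{g'd}$ is a unit. Since $\Spec A_{s}$ is the preimage in $V$ of the complement in $Y$ of the divisor $\{s=0\}\in|NH|$, $N=\deg s$, passing to degree-$0$ parts gives $Y\setminus\{s=0\}\cong\Spec R_{0}\times\AAA^{1}$, an affine variety times a line. Hence $D:=N^{-1}\{s=0\}$ is an effective $\QQ$-divisor with $D\sim_{\QQ}H$, and $Y\setminus\Supp(D)$ is an $H$-polar cylinder.

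For ``cylinder $\Rightarrow$ $\mathbb{G}_{a}$-action'', start from $U=Y\setminus\Supp(D)\cong Z\times\AAA^{1}$ with $D\ge 0$ effective and $D\sim_{\QQ}H$; after a Veronese reduction (lifting the action back to $V$ at the end) $D$ is the zero divisor of a single section $s_{0}\in H^{0}(Y,\OOO_{Y}(H))$, so that $\OOO(U)=\OOO(Z)[t]$ is the degree-$0$ part of $A_{s_{0}}$. The idea is to promote the translation $\partial/\partial t$ of the $\AAA^{1}$-factor to a homogeneous locally nilpotent derivation of $A$. For each $\phi\in\OOO(U)$ the assignment $f\mapsto(\partial/\partial t)(f)$ on $\OOO(U)$ together with $s_{0}\mapsto\phi s_{0}$ extends uniquely to a homogeneous degree-$0$ derivation $\delta_{\phi}$ of $A_{s_{0}}$; the goal is to choose the coordinate $t$ and the function $\phi$ so that $\delta_{\phi}$ carries each $H^{0}(Y,\OOO_{Y}(mH))\subset(A_{s_{0}})_{m}$ into itself, i.e.\ introduces no excess poles along the components of $\Supp(D)$. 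Granting this, $\delta_{\phi}$ restricts to a homogeneous degree-$0$ locally nilpotent derivation of $A$ (local nilpotence being inherited from $\partial/\partial t$ on $\OOO(Z)[t]$), which is the desired $\mathbb{G}_{a}$-action on $V$.

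The step I expect to be the main obstacle is exactly this pole-control. One must first replace the given cylinder by a well-chosen one --- altering the $\AAA^{1}$-coordinate by the pull-back of a function from $Z$ so that $t$ acquires a pole along every component of $\Supp(D)$; such a modification is available because the valuation centre on the affine $U$ of any component of $\Supp(D)$ is empty, which forces some element, and hence some coefficient in $\OOO(Z)$, to have a pole there --- and then bound, for a section $f$ of $mH$, the poles of $(\partial/\partial t)(f/s_{0}^{m})$ along $\Supp(D)$, including the terms produced by cancellation among the $t$-coefficients of $f/s_{0}^{m}$. This is where the hypothesis $D\sim_{\QQ}H$, rather than mere numerical equivalence, is genuinely used: it exhibits $\Supp(D)$ as the divisor of a single homogeneous $s_{0}$ and makes the denominator that has to be cleared a bounded power of $s_{0}$, so that a fixed degree shift (here $0$, after the right choices) suffices. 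The remaining points --- existence and local nilpotence of $\delta_{\phi}$ in the forward direction, the slice-theorem computation in the reverse direction, and throughout the bookkeeping with Veronese subalgebras and with the Seifert structure of $V\to Y$ when $H$ is only $\QQ$-Cartier --- are routine by comparison.
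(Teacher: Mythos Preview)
The paper does not prove this theorem; it is quoted verbatim as Corollary~3.2 of \cite{KPZ1} and used as a black box to motivate the study of polar cylinders. There is therefore no ``paper's own proof'' to compare against.

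That said, your sketch broadly follows the Kishimoto--Prokhorov--Zaidenberg argument, and the ``$\mathbb{G}_a$-action $\Rightarrow$ cylinder'' direction is essentially correct: pass to a homogeneous locally nilpotent derivation, localize at a product of kernel elements times $\partial r$, apply the slice theorem, and read off the cylinder from the degree-$0$ part.

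There is, however, a genuine gap in your ``cylinder $\Rightarrow$ $\mathbb{G}_a$-action'' direction. You introduce a family $\delta_\phi$ of homogeneous degree-$0$ derivations of $A_{s_0}$ defined by $\delta_\phi|_{\OOO(U)}=\partial/\partial t$ and $\delta_\phi(s_0)=\phi\,s_0$, and propose to choose $\phi\in\OOO(U)$ so that $\delta_\phi$ preserves $A$. But for any $\phi\neq 0$ this derivation is \emph{not} locally nilpotent: already $\delta_\phi^{\,n}(s_0)=\bigl(\phi^n+\text{lower terms in }\partial/\partial t\bigr)s_0$, and for $\phi\in\OOO(Z)$ one has exactly $\delta_\phi^{\,n}(s_0)=\phi^n s_0\neq 0$. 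So the only $\delta_\phi$ that is locally nilpotent on $A_{s_0}$ is $\delta_0$, and your claimed freedom in $\phi$ for pole control is illusory. The actual mechanism in \cite{KPZ1} is different: one keeps $s_0\in\ker\delta$ (equivalently $\phi=0$), accepts that $\delta_0$ need not preserve $A$, and instead multiplies by a suitable power $s_0^{\,e}$ to obtain a homogeneous derivation of \emph{positive} degree $e$ which does send each $H^0(Y,\OOO_Y(mH))$ into $H^0(Y,\OOO_Y((m+e)H))$. Local nilpotence survives because $s_0\in\ker\delta_0$, so $(s_0^{\,e}\delta_0)^n=s_0^{\,ne}\delta_0^{\,n}$. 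Your discussion of arranging $t$ to have a pole along every component of $\Supp(D)$ is exactly what is needed to bound $e$ uniformly, but the output is a positive-degree LND, not a degree-$0$ one.
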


There exist a classification of del Pezzo surfaces $X$ such that $X$ has a $-K_X$-polar cylinder (see {\cite{Ch1}}, {\cite{Ch2}}). Also, in the papers {\cite{Ch1}}, {\cite{Ch2}} prove that if a del Pezzo surface $X$ has not $-K_X$-polar cylinder then all tigers contain a support at least one element of $|-K_X|$. Now we prove the inverse statement.

The main result of section 3 is the followings.

\begin{theorem}
\label{glav}
Let $X$ be a del Pezzo surface with du Val singularities. Then $X$ has a $-K_X$-polar cylinder if and only if there exist a tiger such that the support of this tiger does not contain any elements of  $|-K_X|$.
\end{theorem}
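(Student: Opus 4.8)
The implication ``$\Leftarrow$'' is the contrapositive of the theorem of {\cite{Ch1}}, {\cite{Ch2}} recalled above, so it remains to prove ``$\Rightarrow$'': from a $-K_X$-polar cylinder $U=X\setminus\Supp(D)\cong Z\times\AAA^1$ one must produce a tiger whose support contains no member of $|-K_X|$. The plan is to convert the cylinder into a fibration. Since $U$ is a dense open subset of the rational normal surface $X$, the affine curve $Z$ is smooth and rational, say $Z\cong\PP^1\setminus\{p_1,\dots,p_m\}$ with $m\ge 1$; the first projection $U\to Z\hookrightarrow\PP^1$ extends to a rational map $\phi\colon X\dashrightarrow\PP^1$, and resolving its base locus by a birational morphism $\sigma\colon Y\to X$ that is an isomorphism over $U$ yields a morphism $\psi=\phi\circ\sigma\colon Y\to\PP^1$. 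Its general fibre $\mathcal F$ is a smooth rational curve with $\mathcal F^2=0$ and $-K_Y\cdot\mathcal F=2$ which meets $\sigma^{-1}(U)\cong Z\times\AAA^1$ in one affine line, hence meets $Y\setminus\sigma^{-1}(U)$ in exactly one point; its image $\bar F:=\sigma_*\mathcal F$ is the closure in $X$ of a general $\AAA^1$-fibre. Consequently $Y\setminus\sigma^{-1}(U)$ consists of one curve $S$ horizontal over $\PP^1$ (``the section at infinity''), finitely many fibre components over $p_1,\dots,p_m$, and the $\sigma$-exceptional locus; and, since $X$ has du Val, hence canonical, singularities, $\sigma^{*}(-K_X)=-K_Y+\sum a_iE_i$ with all $a_i\ge 0$.

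The first point is that \emph{every} effective $\QQ$-divisor $\Delta\sim_{\QQ}-K_X$ supported on $X\setminus U$ is already a tiger. Indeed, since $S$ is the only component of $Y\setminus\sigma^{-1}(U)$ horizontal over $\PP^1$, a routine computation based on $\sigma^{*}\Delta\cdot\mathcal F=\Delta\cdot\bar F=-K_X\cdot\bar F=\sigma^{*}(-K_X)\cdot\mathcal F=2+\sum a_i(E_i\cdot\mathcal F)$ and $S\cdot\mathcal F=1$ shows that in the log pull-back $K_Y+\Delta_Y=\sigma^{*}(K_X+\Delta)$ the curve $S$ appears with coefficient exactly $2$; hence $(Y,\Delta_Y)$, and therefore $(X,\Delta)$, is not log canonical. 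In particular $D$ itself is a tiger. The only obstruction to finishing is that $\Supp\Delta$ (for the $\Delta$ one finally chooses, $D$ or another) might contain a member of $|-K_X|$.

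The remaining and hardest step is to pick $\Delta$ so that this does not happen. A member of $|-K_X|$ is connected of arithmetic genus one, so if $\Supp\Delta$ is a tree of rational curves it contains no such member; one therefore tries to redistribute the coefficients of $\Delta$ among the components of $X\setminus U$ (using the numerical relations that the cylinder structure imposes) until $\Supp\Delta$ shrinks to a tree while $S$ retains coefficient $2$. The obstruction is the case in which $X\setminus U$ is essentially rigid --- for instance when it coincides with an anticanonical cycle, so that its reduced structure is the only anticanonical $\QQ$-divisor supported on it --- and for that case I would instead build $\Delta'$ from a \emph{general} fibre closure $\bar F$, a curve moving in a positive-dimensional family and hence not a fixed component of any anticanonical linear system, together with rational multiples of boundary curves not contained in the rigid anticanonical member, still arranged to fail log canonicity at the point $\bar F\cap(X\setminus U)$; no member of $|-K_X|$ can then be supported on $\Supp\Delta'$. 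I expect this to be the main difficulty: deciding exactly which curve configurations occur as $X\setminus U$ and checking that the modified divisor is not log canonical will likely require a finite case analysis running in parallel with the classification of del Pezzo surfaces with a $-K_X$-polar cylinder in {\cite{Ch1}}, {\cite{Ch2}}.
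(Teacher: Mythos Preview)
Your first observation --- that every effective $\Delta\sim_{\QQ}-K_X$ supported on the complement of the cylinder is automatically a tiger, because the unique horizontal curve $S$ for the induced $\PP^1$-fibration acquires coefficient $\ge 2$ in the log pullback --- is correct. From there, however, the argument does not close. The criterion you propose for excluding anticanonical members, namely that no element of $|-K_X|$ can be supported on a tree of smooth rational curves, is false: members of $|-K_X|$ need not be reduced. Already for $X=\PP^2$ with the obvious cylinder $\PP^2\setminus L$ one has $D=3L$, and $3L\in|-K_X|$ has support the single smooth rational curve $L$; the same phenomenon recurs for many of the explicit cylinders in \cite{Ch1}. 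Your fallback --- building $\Delta'$ from a general fibre $\bar F$ --- is only a sketch: you give no mechanism guaranteeing that $(X,\Delta')$ is still non-log-canonical once a moving curve is inserted, and you concede yourself that finishing would require a case analysis paralleling \cite{Ch1},\cite{Ch2}. So the hard half of the theorem is not proved.

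The paper takes an entirely different route and never uses the cylinder at all. It invokes the classification (Theorem~\ref{chel1}) to list which pairs $(d,\text{singularity type})$ admit a $-K_X$-polar cylinder, and for each such pair builds the required tiger directly on the minimal resolution $\bar X$ by Riemann--Roch and dimension counts. The template is: write $-nK_{\bar X}\sim A+F$ with $A$ an explicit combination of exceptional $(-2)$-curves (or a fixed $(-1)$-curve), compute $\dim|F|$, impose high multiplicity at a chosen point $Q$ to cut out a subsystem $\Omega\subset|F|$, and compare dimensions to show that a general $N\in\Omega$ cannot be written as $M_1+M_2$ with $M_1\in|-K_{\bar X}|$; then $\tfrac1n f_*(A+N)$ is a tiger whose support contains no anticanonical member. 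This is the content of Lemmas~\ref{Lem1}--\ref{LemE}. Your geometric reduction is appealing, but it relocates rather than removes the casework; the paper's decision to construct the tiger from scratch, instead of massaging the boundary of a given cylinder, is what makes the analysis actually go through.
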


The main result of section 4 is the followings.

\begin{theorem}
\label{glav2}
Let $X$ be a del Pezzo surface with du Val singularities. Then
\begin{itemize}
\item
$X$ has not cylinders if $\rho(X)=1$ and $X$ has one of the followings collections of singularities: $4A_2, 2A_1+2A_3, 2D_4$;
\item In the rest cases there exist an ample divisor $H$ such that $X$ has a $H$-polarization.
\end{itemize}
\end{theorem}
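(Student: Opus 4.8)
The plan is to treat the cases $\rho(X)=1$ and $\rho(X)\geq 2$ separately, reducing the first to Theorem~\ref{glav} and handling the second by induction on the Picard number.

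Suppose first $\rho(X)=1$. Then $N^1(X)_\QQ$ is one--dimensional, so every ample class is a positive rational multiple of $-K_X$, and since rescaling an effective boundary divisor does not change its support, $X$ has an $H$--polar cylinder for some ample $H$ if and only if it has a $(-K_X)$--polar cylinder. By Theorem~\ref{glav} this holds if and only if $X$ carries a tiger whose support contains no member of $|-K_X|$. Thus for $\rho(X)=1$ everything reduces to deciding, configuration by configuration, whether such a tiger exists. Passing to the minimal resolution $\sigma\colon\widetilde X\to X$ — a smooth weak del Pezzo surface of degree $d=K_X^2$ whose $(-2)$--curves span a rank $9-d$, hence full--rank, root sublattice of $K_{\widetilde X}^{\perp}$ — I would, for every admissible configuration other than $4A_2$, $2A_1+2A_3$, $2D_4$ (all of which occur only in degree $1$), produce such a tiger by the classification of {\cite{Ch1}}, {\cite{Ch2}} together with Theorem~\ref{glav} (equivalently, a cylinder on $\widetilde X$ that is disjoint from all $(-2)$--curves and whose image on $X$ is the complement of an effective divisor; this is easy when $\widetilde X$ carries enough $(-1)$--curves), and for the three exceptional configurations prove that none exists.

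Suppose now $\rho(X)\geq 2$. If $X\cong\PP^1\times\PP^1$ then $\PP^1\times\PP^1\setminus\bigl(\{0\}\times\PP^1\cup\PP^1\times\{0\}\bigr)\cong\AAA^1\times\AAA^1$ is an $H$--polar cylinder with $H=\OOO(1,1)$ ample. Otherwise $X$ admits an extremal birational contraction $f\colon X\to X'$, and $X'$ is again a del Pezzo surface with du Val singularities, with $\rho(X')=\rho(X)-1$ and $K_{X'}^2>K_X^2$. In particular $X'$ has degree at least $2$, so it is never one of the three exceptional surfaces, and by induction (the base being $\rho=1$, treated above, and $\PP^1\times\PP^1$) it carries an $H'$--polar cylinder $U'=X'\setminus\Supp(D')$ with $H'$ ample and $D'\equiv H'$ effective. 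Let $T\subset X'$ be the finite image of the exceptional locus of $f$. If $T\not\subset\Supp(D')$, I shrink $U'\cong Z\times\AAA^1$ by deleting the vertical fibres through the points of $T$ lying in $U'$ and enlarge the boundary by small positive multiples of the closures of those fibres; because $-K_{X'}$ is ample the new boundary class stays ample, and now $T$ lies on it. Pulling this boundary back by $f$ and subtracting a small positive multiple of the $f$--exceptional divisor yields an effective divisor in an ample class whose complement is isomorphic to $U'$, hence a cylinder on $X$; this completes the induction.

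The main difficulty is the non--existence statement for the configurations $4A_2$, $2A_1+2A_3$, $2D_4$ (all of degree $1$ and Picard number $1$): one must show that on each of these surfaces every tiger — equivalently every boundary divisor of a $(-K_X)$--polar cylinder — has support meeting $|-K_X|$. I would argue on the minimal resolution: a cylinder on $X$ lifts to a cylinder $\widetilde U\cong Z\times\AAA^1$ on $\widetilde X$ disjoint from the $(-2)$--curves, and taking the closure of the projection to $Z$ and resolving its base points produces a $\PP^1$--fibration $W\to\PP^1$ on a blow--up $W\to\widetilde X$, in which the $(-2)$--curves, the curves contracted by $W\to\widetilde X$, and the one or two horizontal components of the boundary at infinity must be arranged compatibly with the fibre structure. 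Exploiting that $|-K_X|$ has a single base point on a degree--$1$ del Pezzo surface — which constrains where a tiger avoiding $|-K_X|$ can place its non--log canonical centre — I would show that the intersection combinatorics of the $4A_2$, $2A_1+2A_3$ and $2D_4$ configurations leaves no room for such a fibration, while every other full--rank root configuration does admit one. Carrying out this case analysis correctly and exhaustively is the crux of the proof.
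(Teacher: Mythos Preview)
Your overall plan --- separate $\rho(X)=1$ from $\rho(X)\ge 2$ and reduce the latter along extremal contractions --- is the paper's plan as well, but two points need correction.

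For $\rho(X)=1$ you take an unnecessary detour. Theorem~\ref{chel1} (quoted from \cite{Ch1}) already classifies exactly which du Val del Pezzo surfaces lack a $(-K_X)$--polar cylinder; intersecting that list with the condition $\rho=1$ immediately singles out $4A_2$, $2A_1{+}2A_3$, $2D_4$, and this is all the paper does. Routing through Theorem~\ref{glav} and then proposing to re-derive non-existence for the three bad configurations by a fibration argument on the resolution amounts to reproving the very theorem you are happy to cite for the existence half. Your sketch of that argument is plausible but vague, and in any case redundant.

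More seriously, your inductive step for $\rho(X)\ge 2$ has a genuine gap: the claim that every such $X$ other than $\PP^1\times\PP^1$ admits a \emph{birational} extremal contraction is false. A concrete counterexample is the double cover $\pi\colon X\to\PP^1\times\PP^1$ branched over two horizontal and two vertical rulings. This is a du Val del Pezzo surface of degree~$4$ with four $A_1$ singularities and $\rho(X)=2$; writing $F_1,F_2$ for the pullbacks of the two rulings one has $F_i^2=0$, $F_1\cdot F_2=2$, and every curve class lies in the cone they span, so both extremal rays are of fibre type and no birational contraction exists. Your induction simply stalls on such a surface. The paper avoids this by running the full MMP and treating the terminal cases separately: if the output is a conic bundle over $\PP^1$, then on the minimal resolution every $(-2)$--curve lies in a fibre, and Lemma~\ref{Lemm1} builds an ample $H$ and an $H$--polar cylinder directly from the fibration (take $H\sim_{\QQ}-K_{\bar X}+mC$ for a fibre $C$ and large $m$, with boundary containing all singular fibres). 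Your argument needs an analogous clause: if no birational extremal ray exists, pass to the conic bundle and construct the cylinder from it.
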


The author is grateful to professor I. A. Cheltsov for suggesting
me this problem and for his help.

\section{Preliminary results}

We work over complex number field $\CC$.
We employ the following notation:
\begin{itemize}
\item
$(-n)$-curve is a smooth rational curve with self intersection
number $-n$.
\item
$K_{X}$: the canonical divisor on $X$.
\item
$\rho(X)$: the Picard number of $X$.
\end{itemize}

\begin{theorem}[Riemann--Roch, see, for example, {\cite{Har}}, Theorem 1.6, Ch. 5]
\label{har1}
Let $D$ be a divisor on the surface $X$. Then
$$\chi(D)=\frac{1}{2}D(D-K_X)+\chi(\OOO_X).$$
\end{theorem}

\begin{theorem}[Kawamata--Viehweg Vanishing Theorem, see, for example, {\cite{Mat}}, Theorem 5-2-3]
\label{Van1}
Let $X$ be a non-singular projective variety, $A$ an ample $\QQ$-divisor such that the fractional part $\lceil A\rceil-A$ has the support with only normal crossings. Then
$$H^p(X,K_X+\lceil A\rceil)=0,\quad\text{для любого }p>0.$$
\end{theorem}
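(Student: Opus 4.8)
The statement is the classical Kawamata--Viehweg vanishing theorem in its basic (ample) form, so the plan is to recall the standard argument, which reduces it to the Kodaira Vanishing Theorem --- the case in which $A$ is an integral ample divisor --- by a cyclic-covering construction of the kind due to Kawamata, Viehweg, and Esnault--Viehweg.

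First I would dispose of the top degree $p=\dim X=:n$: by Serre duality $H^{n}(X,K_X+\lceil A\rceil)\cong H^{0}(X,\OOO_X(-\lceil A\rceil))^{\vee}$, which vanishes because $-\lceil A\rceil$ has strictly negative intersection with any ample divisor (as $\lceil A\rceil\ge A$ and $A$ is ample) and is therefore not $\QQ$-linearly equivalent to an effective divisor. So it remains to prove the vanishing for $0<p<n$.

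For that I would isolate the fractional part: write $A=\lceil A\rceil-\Delta$, where $\Delta:=\lceil A\rceil-A$ is effective with all coefficients in $[0,1)$ and, by hypothesis, has simple normal crossing support. Choose $N$ with $NA$ an integral very ample divisor (so $N\Delta$ is integral too), pick by Bertini a general smooth member $H\in|NA|$ transversal to $\Supp\Delta$, and set $B:=H+N\Delta$, an effective divisor with simple normal crossing support and $\OOO_X(B)=\OOO_X(N\lceil A\rceil)=\mathcal{L}^{\otimes N}$ for $\mathcal{L}:=\OOO_X(\lceil A\rceil)$. Form the normalized degree $N$ cyclic cover $\pi\colon Z\to X$ attached to $\mathcal{L}$ and $B$; since $B$ is simple normal crossing, $Z$ has only quotient (hence rational) singularities, and one has the splitting
$$\pi_*\omega_Z=\bigoplus_{i=0}^{N-1}\omega_X\otimes\mathcal{L}^{i}\otimes\OOO_X\!\big(-\lfloor iB/N\rfloor\big),$$
whose $i=1$ summand is $\omega_X\otimes\mathcal{L}=\OOO_X(K_X+\lceil A\rceil)$, because $\lfloor B/N\rfloor=\lfloor H/N+\Delta\rfloor=0$. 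Passing to a resolution $\mu\colon Z'\to Z$ changes nothing, by Grauert--Riemenschneider ($\mu_*\omega_{Z'}=\omega_Z$ and $R^{q}\mu_*\omega_{Z'}=0$ for $q>0$), so $\OOO_X(K_X+\lceil A\rceil)$ is a direct summand of $(\pi\circ\mu)_*\omega_{Z'}$.

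The genuine difficulty --- the step I expect to be the main obstacle --- is that one \emph{cannot} simply invoke vanishing of $H^{p}(X,(\pi\circ\mu)_*\omega_{Z'})$ for all $p>0$: that is false, since the $i=0$ summand is $\omega_X$ and the other summands may carry intermediate cohomology too. Isolating the vanishing of the single summand $\OOO_X(K_X+\lceil A\rceil)$ for $0<p<n$ requires the finer, Hodge-theoretic input: one decomposes the logarithmic de Rham complex of $Z'$ along the reduced preimage of $\Supp B$ under the $\ZZ/N$-action, its eigen-components being complexes whose hypercohomology computes the cohomology of the twisted sheaves occurring above, and one deduces the desired vanishing from the degeneration of the associated Hodge-to-de Rham spectral sequence --- via Deligne's mixed Hodge theory for the complement of $\Supp B$ --- or, equivalently, by iterating the covering construction until one lands on an honestly ample integral divisor and applies the Kodaira Vanishing Theorem directly. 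Kodaira vanishing itself I would take as known: it follows from Hodge theory via the Akizuki--Nakano identity ($H^{q}(Y,\Omega^{p}_Y\otimes\OOO_Y(\widetilde A))=0$ whenever $p+q>\dim Y$ and $\widetilde A$ is ample, specialized to $p=\dim Y$), or algebraically from the Deligne--Illusie degeneration after reduction modulo a prime. The remaining care is routine: keeping $B$ simple normal crossing (Bertini together with transversality to $\Supp\Delta$, which is exactly the role of the hypothesis on $\lceil A\rceil-A$) and the rational-singularities bookkeeping for the cyclic cover, handled by Grauert--Riemenschneider.
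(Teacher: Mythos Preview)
The paper does not prove this theorem at all: it is stated in the preliminaries with a citation to Matsuki's textbook and is then used as a black box in the Riemann--Roch computations that follow. Your proposal, by contrast, sketches the standard cyclic-cover reduction to Kodaira vanishing (in the Kawamata/Viehweg/Esnault--Viehweg style), and the outline is correct: the identification of the $i=1$ eigensummand of $\pi_{*}\omega_{Z}$ with $\OOO_{X}(K_{X}+\lceil A\rceil)$, the Grauert--Riemenschneider step, and the acknowledgment that the genuine content lies in the Hodge-theoretic degeneration (or an iterated covering reducing to integral Kodaira) are exactly the ingredients of the textbook proof. Since the paper supplies no argument of its own, there is no meaningful comparison of approaches to make; your sketch simply goes further than the paper does.
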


Let $X$ be a del Pezzo surface with du Val singularities. Let $d$ be the degree of $X$, i.e. $d=K_X^2$.

\begin{theorem}[see {\cite{Ch1}}, Theorem 1.5]
\label{chel1}
Let $X$ be a del Pezzo surface of degree $d$ with at most du Val singularities.
\begin{itemize}
\item[I.]
The surface $X$ does not admit a $-K_X$-polar cylinder when
\begin{enumerate}
\item $d=1$ and $X$ allows only singular points of types $A_1, A_2, A_3, D_4$ if any;
\item
$d=2$ and $X$ allows only singular points of types $A_1$ if any;
\item
$d=3$ and $X$ allows no singular point.
\end{enumerate}
\item[II.] The surface $X$ has a $-K_X$-polar cylinder if it is not one of the del Pezzo surfaces
listed in I.
\end{itemize}
\end{theorem}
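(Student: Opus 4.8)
The plan is to reduce the statement to a finite case analysis and then treat the two directions by quite different methods. First I would pass to the minimal resolution $\pi\colon\tilde X\to X$, a smooth weak del Pezzo surface with $-K_{\tilde X}=\pi^*(-K_X)$, so that $-K_X$-polar cylinders on $X$ correspond to cylinders on $\tilde X$ whose boundary contains the $\pi$-exceptional $(-2)$-curves. The du Val singularities are encoded by the ADE configuration of these $(-2)$-curves, and for each degree $d=K_X^2\in\{1,\dots,9\}$ there are only finitely many admissible configurations; these are classically listed. Thus it suffices to decide, for each pair consisting of a degree and a singularity type, whether a cylinder exists, and I would organize the whole proof around this finite list, using Theorem~\ref{har1} to compute the relevant anticanonical and fibre linear systems on $\tilde X$.

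For direction II (existence) I would use a single construction engine. It suffices to produce an effective $\QQ$-divisor $D\equiv -K_X$ such that $U=X\setminus\Supp(D)$ carries a morphism $U\to Z$ onto a smooth affine curve with all fibres isomorphic to $\AAA^1$ and admitting a section; then $U\cong Z\times\AAA^1$. On the resolution $\tilde X$ I would obtain such data from a $\PP^1$-fibration $\tilde X\to\PP^1$ (a conic-bundle structure) whose total boundary --- a section realized by a $(-1)$-curve, finitely many fibres, and the exceptional $(-2)$-curves --- supports a $\QQ$-divisor numerically equivalent to $-K_{\tilde X}=\pi^*(-K_X)$, the existence of this $\QQ$-divisor being checked by Theorem~\ref{har1}. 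For $d\ge 4$ the abundance of $(-1)$-curves and conic bundles makes this immediate, while for $d\le 3$ the additional $(-2)$-curves attached to a singularity heavier than those permitted in part I furnish exactly the extra section or fibre components needed; I would then exhibit $D$ and the fibration for each remaining type on the finite list.

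For direction I (non-existence) the work is to show that for the listed surfaces no effective anticanonical $\QQ$-divisor can have cylinder complement, and this is where I expect the main difficulty. The strategy is to turn a hypothetical cylinder into a singularity obstruction: an $\AAA^1$-fibration on $X\setminus\Supp(D)$ completes to a pencil on $\tilde X$ whose general member, together with the boundary, forces an effective $\QQ$-divisor $D'\equiv -K_X$ for which the pair $(X,D')$ remains log canonical yet concentrates too much multiplicity at a point or along a curve. I would then derive a contradiction with a sharp log-canonicity bound for the listed surfaces, proving that bound by estimating multiplicities of anticanonical members via Theorem~\ref{har1} and by invoking Theorem~\ref{Van1} (in Nadel form on $\tilde X$) to control the higher cohomology and the connectedness of the non-log-canonical locus. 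The delicate point, and the crux of the argument, is that these bounds are sharp precisely along the boundary of the list: upgrading, say, an $A_3$ to an $A_4$ or a $D_4$ to a $D_5$ at $d=1$ makes the estimate fail and a cylinder reappear, so the analysis must be carried out with the exact intersection numbers on $\tilde X$ rather than with uniform inequalities, and verifying that the list is exhaustive is the hardest step.
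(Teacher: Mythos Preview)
The paper does not prove this theorem at all: it is stated in the preliminaries with the attribution ``see \cite{Ch1}, Theorem~1.5'' and is used as a black box throughout Section~3. There is therefore no proof in the present paper against which to compare your proposal. Your outline is a reasonable high-level summary of the strategy actually employed in \cite{Ch1} and \cite{Ch2} --- explicit cylinder constructions via conic-bundle structures for direction~II, and log-canonicity/multiplicity obstructions (essentially the tiger machinery) for direction~I --- but that work is substantial and is not reproduced here.

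If your intention was to supply an independent proof, be aware that direction~I is the hard part and your sketch underestimates it: the step where a hypothetical cylinder is converted into an effective $D'\equiv -K_X$ violating a sharp log-canonicity bound requires, in the original papers, a delicate analysis of how the $\AAA^1$-fibration degenerates at the boundary, control of the base locus under birational modification, and case-by-case multiplicity estimates that do not follow from Riemann--Roch and Kawamata--Viehweg alone. The phrase ``forces an effective $\QQ$-divisor $D'$ for which the pair $(X,D')$ remains log canonical yet concentrates too much multiplicity'' hides precisely the content of several sections of \cite{Ch1} and \cite{Ch2}, and would need to be unpacked substantially before it constitutes a proof.
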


\section{The proof of theorem \ref{glav}}

In the papers {\cite{Ch1}} and {\cite{Ch2}} authors classify del Pezzo surfaces $X$ such that $X$ has a $-K_X$-polar cylinder. Moreover, they prove that if a del Pezzo surface $X$ has not a $-K_X$-polar cylinder then every tiger on $X$ contains an element of $|-K_X|$. So, we need prove that if a del Pezzo surface $X$ has a $-K_X$-polar cylinder then there exist a tiger such that the support of this tiger does not contain any elements of  $|-K_X|$.

\begin{lemma}
\label{Lem1}
Let $X$ be a del Pezzo surface with du Val singularities and let $d$ be the degree of $X$. Assume that $d\geq 7$. Then $X$ has a $-K_X$-polar cylinder and there exist a tiger such that the support of this tiger does not contain any elements of $|-K_X|$.
\end{lemma}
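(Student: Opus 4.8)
The plan is to treat the two assertions separately. For the existence of a $-K_X$-polar cylinder there is nothing new to do: since $d\ge 7>3$, the surface $X$ is not among those listed in part~I of Theorem~\ref{chel1}, so by part~II it already carries a $-K_X$-polar cylinder. The whole problem is therefore to exhibit a tiger whose support contains no member of $|-K_X|$. A tempting uniform route is to pick a general smooth point $p\in X$ and, via Riemann--Roch (Theorem~\ref{har1}) and Kawamata--Viehweg vanishing (Theorem~\ref{Van1}), find an irreducible curve $C\in|-nK_X|$ with $\mult_p C>2n$ for a suitable $n$; then $D=\tfrac1nC$ is a tiger whose support is a single irreducible curve of anticanonical degree $nd>d$, hence cannot contain an anticanonical divisor. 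The snag is that an irreducible $C\in|-nK_X|$ has arithmetic genus $\tfrac12n(n-1)d+1$, so producing one with a point of multiplicity $>2n$ is a Severi-type existence question; rather than develop that, I would use that $d\ge 7$ leaves only six surfaces: $\PP^2$; for $d=8$ one of $\PP^1\times\PP^1$, $\mathbb{F}_1$, $\PP(1,1,2)$; for $d=7$ the smooth blow-up of $\PP^2$ at two points, or the degree $7$ surface with one $A_1$ point.

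For each of these I would write down an effective $\QQ$-divisor $D\equiv -K_X$ with two features: (a) one component of $D$ is a smooth curve $Z$ contained in the smooth locus of $X$ and occurring with coefficient $>1$, so that $(X,D)$ is not log canonical at a general point of $Z$ (the lct of a smooth divisor being $1$), and hence $D$ is a tiger; and (b) the classes of the irreducible components of $\Supp D$ admit no representation of $[-K_X]$ as a non-negative \emph{integral} combination, so that no member of $|-K_X|$ is supported on $\Supp D$ (and $D$ itself, being non-integral, is not anticanonical). The concrete choices I would make are: $D=\tfrac32 C$ on $\PP^2$ with $C$ a smooth conic; $D=\tfrac43 f+\tfrac23 C$ on $\PP^1\times\PP^1$ with $f$ a ruling and $C$ a smooth $(3,1)$-curve; $D=\tfrac32 C+\tfrac12 C_0$ on $\mathbb{F}_1$ with $C_0$ the $(-1)$-curve and $C$ a smooth member of $|C_0+2f|$; $D=\tfrac43 C$ on $\PP(1,1,2)$ with $C$ a general member of $|\OOO(3)|$ (irreducible and smooth off the vertex, by Bertini); $D=\tfrac32 C+\tfrac12 E_1+\tfrac12 E_2$ on $\mathrm{Bl}_{p_1,p_2}\PP^2$ with $C$ the proper transform of a smooth conic through $p_1,p_2$; and for the $A_1$ surface, realize it as the blow-up $\tau\colon X\to\PP(1,1,2)$ at a general point with exceptional curve $E$ and take the pullback $D=\tfrac43\widetilde C+\tfrac13 E$ of the $\PP(1,1,2)$ construction, where $\widetilde C$ is the proper transform of a general $C\in|\OOO(3)|$ through the blown-up point.

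The step I expect to be the only real obstacle is the verification of (b) in each case, and it is where I would be careful. In every case it reduces, after clearing denominators, to the non-solvability in non-negative integers of a single numerical equation such as $2m=3$ or $3m=4$ (e.g. on $\PP^2$ an anticanonical divisor supported on the conic $C$ would give $2m\,[C]/2=3H$, i.e. $2m=3$; on $\PP(1,1,2)$ it would give $3m\cdot(\text{ruling})=4\cdot(\text{ruling})$, i.e. $3m=4$; and similarly for the others after writing $\Pic$ and the component classes explicitly). The reason this works, and the reason the fractional coefficients are forced on us, is that $-K_X$ is typically divisible in $\Pic(X)$ — $-K_{\PP^2}=3H$, $-K_{\PP^1\times\PP^1}=2H'$ with $H'$ of bidegree $(1,1)$, $-K_{\mathbb{F}_1}=2C_0+3f$, $-K_{\PP(1,1,2)}=2\cdot(\text{hyperplane section})$ — so the support must be built out of curves whose classes are \emph{not} among the natural divisors of $-K_X$. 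Once (a) and (b) are checked in all six cases, $D$ is the required tiger, and together with Theorem~\ref{chel1} this proves the lemma.
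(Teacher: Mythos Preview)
Your proof is correct but follows a genuinely different route from the paper's. The paper argues uniformly: it considers the linear subsystem $\Omega\subset|-2K_X|$ of divisors with multiplicity $\ge 5$ at a fixed smooth point $P$, computes $\dim\Omega=3d-15\ge 6$, and shows by a dimension count that a general $N\in\Omega$ cannot split as $M_1+M_2$ with $M_i\in|-K_X|$ (the locus of such decomposable divisors has dimension at most $2d-9<3d-15$); then $\tfrac12N$ is the desired tiger. You instead exploit that $d\ge 7$ leaves only six isomorphism classes and write down an explicit tiger on each, using a Picard-lattice parity obstruction (e.g.\ $2m=3$ or $3m=4$) to rule out any anticanonical divisor supported on its components. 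The paper's approach is uniform and serves as the template for all the subsequent lemmas in the section, where the same Riemann--Roch/vanishing/dimension-count machine is rerun with exceptional configurations replacing the bare multiplicity condition; your approach is more elementary and transparent, and avoids the somewhat delicate bookkeeping of bounding the dimension of the decomposable locus, at the cost of not scaling to lower degree where the surfaces form positive-dimensional families. One small wrinkle: your criterion~(a) asks that the high-coefficient component $Z$ lie in the smooth locus, but on $\PP(1,1,2)$ every member of $|\OOO(3)|$ passes through the vertex (all weight-$3$ monomials vanish there); this is harmless, since you only need $(X,D)$ to fail log canonicity at \emph{some} point, and a general point of $C$ lies in the smooth locus and does the job.
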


\begin{proof}
By Theorem \ref{chel1}, we see that $X$ has a $-K_X$-polar cylinder. Now, we construct a tiger such that the support of this tiger does not contain any elements of $|-K_X|$.
Consider $|-2K_X|$. By Theorem \ref{har1} and Theorem \ref{Van1}, $\dim|-2K_X|=\frac{-2K_X\cdot(-2K_X-K_X)}{2}=3d$. Let $P$ be an arbitrary smooth point on $X$. Consider a set $\Omega$ of elements $L\in|-2K_X|$ such that $\mult_P L\geq 5$.
 Then $\Omega$ is a linear subsystem of the linear system $|-2K_X|$. Note that $\dim|\Omega|=3d-15\geq 6$ for $d\geq 7$. Hence, $\Omega$ is not empty. Let $N\in\Omega$ be a general element of the linear system $\Omega$.

 Note that $N$ does not contain a support of anti-canonical divisor. Indeed, assume that there exist an element $M_1\in |-K_X|$ such that $\Supp M_1\subseteq\Supp N$. Then $N=M_1+M_2$, where  $M_2\in|-K_X|$. We see that $\dim |-K_X|=\frac{-K_X\cdot(-K_X-K_X)}{2}=d$. Therefore, $\mult_P M_1\leq 3$ and $\mult_P M_2\leq 3$. Hence, we may assume that $\mult_P M_1=2$, $\mult_P M_2=3.$ Let $\tilde{M}_1$ be the linear subsystem of $|-K_X|$ such that $\tilde{M}_1$ consist of elements with multiply two in the point $P$. Let $\tilde{M}_2$ be the linear subsystem of $|-K_X|$ such that $\tilde{M}_2$ consist of elements with multiply three in the point $P$. Then $$\dim|\tilde{M}_1+\tilde{M}_2|=\dim|\tilde{M}_1|+\dim|\tilde{M}_2|=(d-3)+(d-6)=2d-9.$$ Note that $3d-15>2d-9$ for $d\geq 7$. Hence, a general element $N$ of the linear system $\Omega$ does not contain a support of anti-canonical divisor. Then $\frac{1}{2}N$ is a tiger such that the support of this tiger does not contain any elements of $|-K_X|$.
\end{proof}

\begin{lemma}
\label{Lem2}
Let $X$ be a del Pezzo surface with du Val singularities and let $d$ be the degree of $X$. Assume that $d=4,6$. Then $X$ has a $-K_X$-polar cylinder and there exist a tiger such that the support of this tiger does not contain any elements of $|-K_X|$.
\end{lemma}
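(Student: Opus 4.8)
The plan is the following. By Theorem~\ref{chel1} the surface $X$ automatically has a $-K_X$-polar cylinder (degree $4$ and degree $6$ do not occur in list~I), so the whole content of the lemma is to produce a tiger $D$ whose support contains no member of $|-K_X|$. I would construct it on the minimal resolution $\pi\colon\widetilde X\to X$, which is a smooth weak del Pezzo surface of the same degree: since du Val singularities are canonical, $K_{\widetilde X}=\pi^{*}K_X$, so if $\widetilde D\ge 0$ is a $\QQ$-divisor with $\widetilde D\equiv -K_{\widetilde X}$ that is not log canonical at some point, then $\pi_{*}\widetilde D$ is again a tiger on $X$ (log discrepancies only decrease under the push-forward), and the condition ``no anticanonical divisor lies in the support'' can be tested on $\widetilde X$ as well.

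First I would fix a $(-1)$-curve $E\subset\widetilde X$ and a general point $\widetilde P\in E$, and work in the linear system $|{-2K_{\widetilde X}}-E|$. By Riemann--Roch (Theorem~\ref{har1}), together with $h^{2}(\widetilde X,-2K_{\widetilde X}-E)=h^{0}(\widetilde X,3K_{\widetilde X}+E)=0$ (this class has negative degree against the nef and big class $-K_{\widetilde X}$), one gets $\dim|{-2K_{\widetilde X}}-E|\ge\chi(-2K_{\widetilde X}-E)=\tfrac12(-2K_{\widetilde X}-E)(-3K_{\widetilde X}-E)+1=3d-2\ge 10$. Requiring a curve $B$ in this system to satisfy $\mult_{\widetilde P}B\ge 3$ and to be tangent to $E$ at $\widetilde P$ (i.e.\ the tangent direction of $E$ is a root of the cubic tangent cone of $B$) imposes at most $6+1=7$ linear conditions, so such $B$ form a linear subsystem of dimension $\ge 3$; let $B$ be a general member and set $\widetilde D=\tfrac12(B+E)$, so $\widetilde D\ge 0$ and $\widetilde D\equiv -K_{\widetilde X}$.

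Next I would check the non-log-canonicity. For general $B$ in that subsystem $\mult_{\widetilde P}B=3$ with three distinct tangent directions, one of which is that of $E$; hence $B+E$ has multiplicity $4$ at $\widetilde P$ with a \emph{non-reduced} tangent cone (the tangent direction of $E$ appears with multiplicity two). Using the standard bound $\operatorname{lct}_{\widetilde P}(C)\le 2/\mult_{\widetilde P}(C)$ for a plane curve germ, with equality only when the tangent cone is reduced, one obtains $\operatorname{lct}_{\widetilde P}(B+E)<\tfrac12$, so $(\widetilde X,\tfrac12(B+E))$ is not log canonical at $\widetilde P$; a single blow-up at $\widetilde P$ together with a Newton-polygon computation makes the threshold explicit. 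Thus $D:=\pi_{*}\widetilde D$ is a tiger on $X$.

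Finally, for the support condition: if some $M\in|{-}K_X|$ had $\Supp M\subseteq\Supp D=\pi(E)\cup\pi(B)$ with $B$ irreducible, then the strict transform of $M$ on $\widetilde X$ would be $aE+bB$ with $a,b\in\ZZ_{\ge 0}$ and $aE+bB+(\text{effective exceptional})\equiv -K_{\widetilde X}$; intersecting with $-K_{\widetilde X}$ gives $a\cdot 1+b(2d-1)=d$, so $b=0$ and $a=d$, i.e.\ $dE$ plus an effective sum of $(-2)$-curves would be anticanonical on $\widetilde X$, which one excludes by a suitable choice of $E$ (and which is automatic whenever $E$ misses the exceptional locus, in particular when $X$ is smooth). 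The step I expect to be the real obstacle is making the last two paragraphs valid for \emph{every} du Val configuration of degree $4$ and $6$: one must argue that the general $B$ in the constrained system is actually irreducible (a quadruple point would kill too much arithmetic genus in $|{-2K_{\widetilde X}}|$, but a triple point in $|{-2K_{\widetilde X}}-E|$ need not, so this is not formal and requires excluding fixed components configuration by configuration), and for the few degenerate surfaces where $dE$ is effective for every $(-1)$-curve $E$ one must replace this construction by another decomposition of an anticanonical $\QQ$-divisor (for instance using a different auxiliary curve, or a pluri-anticanonical system), again reducing the support condition to intersection arithmetic on $\widetilde X$. This case analysis is the heart of the lemma.
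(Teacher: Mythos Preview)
Your non-log-canonicity step does not go through. The assertion that a plane curve germ of multiplicity $m$ with non-reduced tangent cone has $\operatorname{lct}<2/m$ is false (for instance $x^{2}y^{2}=0$ has multiplicity $4$ and $\operatorname{lct}=1/2$). In your situation, with $B$ having an ordinary triple point at $\widetilde P$ one of whose tangent directions is that of $E$, blow up $\widetilde P$: the strict transforms of $E$, of the relevant branch of $B$, and the exceptional curve $F$ are three smooth pairwise transverse curves through a single point $q$, carrying coefficients $\tfrac12,\tfrac12,1$ in the log pullback of $\tfrac12(B+E)$. Blowing up $q$ produces an exceptional divisor of log discrepancy $2-(\tfrac12+\tfrac12+1)=0$, and after this second blow-up the boundary is snc with all coefficients $\le 1$. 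Hence $\operatorname{lct}_{\widetilde P}(B+E)=\tfrac12$ exactly, so $\bigl(\widetilde X,\tfrac12(B+E)\bigr)$ is log canonical and is \emph{not} a tiger.

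There is in fact a more elementary obstruction hiding here. One has $E\cdot B=E\cdot(-2K_{\widetilde X}-E)=3$, while $\mult_{\widetilde P}B\ge 3$ already forces $(E\cdot B)_{\widetilde P}\ge 3$, and your tangency requirement pushes this to $(E\cdot B)_{\widetilde P}\ge 4$. Consequently every $B$ in your constrained subsystem must contain $E$; writing $B=E+B'$ with $B'\in|{-2K_{\widetilde X}-2E}|$ and $\mult_{\widetilde P}B'\ge 2$, your candidate becomes $\tfrac12(B+E)=E+\tfrac12 B'$, which by inversion of adjunction is non-lc only when $(E\cdot B')_{\widetilde P}\ge 3$, and this fails for general $B'$. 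So the construction, as written, cannot manufacture a tiger.

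The paper sidesteps both issues by working one step higher: it writes $-3K_{\bar X}\sim 2E+F$ and imposes $\mult_P N\ge 5$ on $N\in|F|$ (the system has dimension $6d-9$, so this is $6d-24\ge 0$ conditions), giving $\mult_P(2E+N)\ge 7$; then $\tfrac13(2E+N)$ has multiplicity $>2$ at $P$ and is automatically non-log-canonical. The support condition is then handled not via irreducibility of $N$ but by a short case analysis of the possible decompositions $2E+N=M_1+M_2$ with $M_1\in|{-K_{\bar X}}|$, $M_2\in|{-2K_{\bar X}}|$, comparing dimensions of the relevant subsystems.
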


%\\\
\begin{proof}
By Theorem \ref{chel1}, we see that $X$ has a $-K_X$-polar cylinder. Now, we construct a tiger such that the support of this tiger does not contain any elements of $|-K_X|$.
Let $f:\bar{X}\rightarrow X$ be the minimal resolution. Let $E$ be a $(-1)$-curve on $\bar{X}$ and $E'=f(E)$. Put $-3K_{\bar{X}}\backsim 2E+F$. Then $-3K_{\bar{X}}\cdot E=2E^2+F\cdot E$. Since $K_{\bar{X}}\cdot E=-1$ and $E^2=-1$, we see that $F\cdot E=5$. We have $-3K_{\bar{X}}^2=2E\cdot K_{\bar{X}}+F\cdot K_{\bar{X}}$. Since $K_{\bar{X}}\cdot E=-1$ and $K_{\bar{X}}^2=d$, we see that $F\cdot K_{\bar{X}}=-(3d-2)$. We obtain $-3K_{\bar{X}}\cdot F= 2E\cdot F+F^2$. Since $F\cdot E=5$ and $F\cdot K_{\bar{X}}=-(3d-2)$ we see that $F^2=9d-16$.
Hence, by Theorem \ref{har1} and Theorem \ref{Van1}, $\dim |F|=6d-9$. Let $P'$ be a general smooth point on $E'$ and $P'=f(P)$. Consider a set $\Omega$ of elements $L\in|F|$ such that $\mult_P L\geq 5$.
Note that $\dim|\Omega|=6d-9-15=6d-24\geq 0$ for $d\geq 4$, i.e. $\Omega$ is non-empty. We see that $\Omega$ contains an element $N$ such that $N+E$ does not contain a support of anti-canonical divisor. Indeed, assume that for all $N\in\Omega$ there exist  $M_1\in |-K_{\bar{X}}|$ such that $\Supp M_1\subseteq\Supp (N+E)$. Then $N+2E=M_1+M_2$, where
$M_2\in|-2K_{\bar{X}}|$. We have the followings three cases.

Case 1. $M_1=2E+F_1$, $M_2$ does not contain the curve $E$. Hence, $F_1\cdot E=3$, $F_1\cdot K_{\bar{X}}=-(d-2)$, $F_1^2=d-8\leq -2$, a contradiction.

Case 2. $M_1=E+F_1$, $M_2=E+F_2$. Then $F_1\cdot E=2$, $F_1\cdot K_{\bar{X}}=-(d-1)$, $F_1^2=d-3$, $F_2\cdot E=3$, $F_2\cdot K_{\bar{X}}=-(2d-1)$, $F_2^2=4d-5$. Hence, $\dim|F_1|=d-2$, $\dim |F_2|=3d-3$.
Note that the multiplicities $F_1$ and $F_2$ in the point $P$ are equaled 2 and 3 correspondingly. Let $\tilde{F}_1$ be the linear subsystem of $|F_1|$ such that the multiplicity of elements of $\tilde{F}_1$ is equaled two in the point $P$, let $\tilde{F}_2$ be the linear subsystem of $|F_2|$ such that the multiplicity of elements of $\tilde{F}_2$ is equaled three in the point $P$. Then $\dim|\tilde{F}_1|=d-5.$ Hence, $d=6$. Note that $$\dim|\tilde{F}_1+\tilde{F}_2|=\dim|\tilde{F}_1|+\dim|\tilde{F}_2|=(d-5)+(3d-9)=4d-14=10.$$ On the other hand, $\dim|\Omega|=6d-24=12>10$. Therefore, a general element $N\in\Omega$ does not contain $\Supp(-K_{\bar{X}})\backslash\Supp(E)$.

Case 3. $M_2=2E+F_2$, $M_1$ does not contain the curve $E$. Then $F_2\cdot E=4$, $F_2\cdot K_{\bar{X}}=-(2d-2)$, $F_2^2=4d-12$. Hence, $\dim|F_2|=3d-7$, $\dim|M_1|=d$. Note that the multiplicities $M_1$ and $F_2$ in the poin $P$ are equal to 1 and 4 correspondingly. Let $\tilde{M}_1$ be the set of elements of the linear system $|-K_{\bar{X}}|$ that pass through the point $P$, let  $\tilde{F}_2$ be the set of elements of the linear system $|F_2|$ that have multiplicity four in the point $P$. Note that $\tilde{F}_1$ and $\tilde{M}_2$ are the linear system. Then $\dim|\tilde{F}_2|=3d-17.$ Hence, $d=6$. Note that $$\dim|\tilde{M}_1+\tilde{F}_2|=\dim|\tilde{M}_1|+\dim|\tilde{F}_2|=(d-1)+(3d-17)=4d-18=6.$$ On the other hand, $\dim|\Omega|=6d-24=12>6$. Therefore, a general element $N\in\Omega$ does not contain any elements of $|-K_{\bar{X}}-E|$.

So, a general element $N\in\Omega$ does not contain any elements of $|-K_{\bar{X}}-E|$. Denote this element by $N$. Note that $\mult_P(2E+N)\geq 7$. Then $\frac{1}{3}f(N)+\frac{2}{3}E'$ is a tiger such that the support of this tiger does not contain any elements of $|-K_X|$.
\end{proof}

\begin{lemma}
\label{Lem3}
Let $X$ be a del Pezzo surface with du Val singularities and let $d$ be the degree of $X$. Assume that $d=5$. Then $X$ has a $-K_X$-polar cylinder and there exist a tiger such that the support of this tiger does not contain any elements of $|-K_X|$.
\end{lemma}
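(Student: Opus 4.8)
The plan is to mimic the strategy of Lemmas \ref{Lem1} and \ref{Lem2}, adapting the numerology to the case $d=5$. First, by Theorem \ref{chel1}, a del Pezzo surface of degree $5$ with du Val singularities always admits a $-K_X$-polar cylinder, so only the construction of the tiger remains. The approach is to pass to the minimal resolution $f\colon\bar X\to X$, fix a $(-1)$-curve $E\subset\bar X$ with image $E'=f(E)$, and work with a multiple of the anti-canonical class twisted along $E$. Following Lemma \ref{Lem2}, I would write $-3K_{\bar X}\sim 2E+F$ and compute $F\cdot E$, $F\cdot K_{\bar X}$, $F^2$, and hence $\dim|F|=6d-9=21$ by Riemann--Roch (Theorem \ref{har1}) together with Kawamata--Viehweg vanishing (Theorem \ref{Van1}) — the same intersection-theoretic bookkeeping already carried out there, now with $d=5$.

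Next I would pick a general smooth point $P'\in E'$, its preimage $P\in E$, and form the subsystem $\Omega\subset|F|$ of divisors $L$ with $\mult_P L\geq 5$; imposing multiplicity $\geq 5$ at a point cuts down the dimension by at most $\binom{5+1}{2}=15$, giving $\dim|\Omega|\geq 6d-24=6$ when $d=5$, so $\Omega$ is non-empty. The key step is then to show that a general $N\in\Omega$ has the property that $N+E$ (equivalently, $N+2E$ in the relevant decompositions) does not contain the support of any anti-canonical divisor. As in Lemma \ref{Lem2}, I would suppose the contrary and split into cases according to how many copies of $E$ appear in the putative decomposition $N+2E=M_1+M_2$ with $M_1\in|-K_{\bar X}|$, $M_2\in|-2K_{\bar X}|$: the cases $M_1=2E+F_1$, $M_1=E+F_1$ with $M_2=E+F_2$, and $M_2=2E+F_2$. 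In each case one computes the intersection numbers of the residual divisors $F_i$ with $E$ and $K_{\bar X}$, reads off $F_i^2$ and $\dim|F_i|$, imposes the forced multiplicity conditions at $P$, and checks that the dimension of the family of such decompositions is strictly smaller than $\dim|\Omega|$ — so a general member of $\Omega$ avoids them all.

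I expect the dimension count in the intermediate case (the analogue of Case 2 in Lemma \ref{Lem2}, where one copy of $E$ goes to each side) to be the main obstacle, since there the numerology is tightest; for $d=5$ some subsystem dimensions like $\dim|\tilde F_1|=d-5=0$ and $\dim|\tilde F_2|=3d-9=6$ become small, and one must verify carefully that $\dim|\Omega|=6$ still strictly exceeds the total $\dim|\tilde F_1+\tilde F_2|$, or else refine the argument (e.g. by choosing $P$ more carefully, or by exploiting that $E'$ is a curve rather than a fixed point, so that $N$ sweeps out a positive-dimensional family as $P'$ varies on $E'$). Once the general $N\in\Omega$ is shown to avoid every element of $|-K_{\bar X}-E|$, one notes $\mult_P(2E+N)\geq 7$, pushes forward, and concludes that $\frac13 f(N)+\frac23 E'$ is an effective $\QQ$-divisor $\QQ$-linearly equivalent to $-K_X$ with $\mult_{P'}\geq\tfrac73>2$, hence a tiger whose support contains no element of $|-K_X|$.
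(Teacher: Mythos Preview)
Your approach follows Lemma~\ref{Lem2} rather than the paper's actual argument, and the very obstacle you flag is fatal: for $d=5$ the dimension count in the split $M_1=E+F_1$, $M_2=E+F_2$ with $\mult_P F_1=2$, $\mult_P F_2\ge 3$ gives $\dim|\tilde F_1|=d-5=0$ and $\dim|\tilde F_2|=3d-9=6$, so $\dim|\tilde F_1+\tilde F_2|\le 6=\dim|\Omega|$. The strict inequality needed to conclude that a general $N\in\Omega$ avoids this decomposition is simply not available, and your suggested remedies (varying $P'$ along $E'$, or choosing $P$ more carefully) are not worked out; varying $P$ adds one parameter to both sides and does not break the tie. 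This numerical coincidence is exactly why the paper does \emph{not} fold $d=5$ into Lemma~\ref{Lem2} but proves it separately.

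The paper's proof abandons the $(-1)$-curve entirely and works on $X$ with $|-4K_X|$ at an arbitrary smooth point $P$. One has $\dim|-4K_X|=50$; imposing $\mult_P\ge 9$ costs $45$, leaving $\dim|\Omega|=5$. If some $N\in\Omega$ contained an anticanonical support, write $N=M_1+M_2$ with $M_1\in|-K_X|$, $M_2\in|-3K_X|$; since $\dim|-K_X|=5$ and $\dim|-3K_X|=30$, one is forced to $\mult_P M_1=2$, $\mult_P M_2=7$, and the corresponding subsystems have dimensions $5-3=2$ and $30-28=2$, summing to $4<5$. Hence a general $N\in\Omega$ contains no anticanonical support, and $\tfrac14 N$ is the required tiger. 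The passage to $-4K_X$ is what makes the arithmetic work where the $-3K_{\bar X}$ route stalls.
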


\begin{proof}
By Theorem \ref{chel1}, we see that $X$ has a $-K_X$-polar cylinder. Now, we construct a tiger such that the support of this tiger does not contain any elements of $|-K_X|$.
Consider $|-4K_X|$. By Theorem \ref{har1} and Theorem \ref{Van1}, we see that $\dim|-4K_X|=50$. Let $P$ be an arbitrary smooth point on $X$. Consider a set $\Omega$ of elements $L\in|-4K_X|$ such that $\mult_P L\geq 9$.
Then $\Omega$ is the linear subsystem of the linear system of $|-4K_X|$. Note that $\dim|\Omega|=50-45=5$. Hence, $\Omega$ is non-empty. Let $N\in\Omega$ be a general element of the linear system $\Omega$. We see that $N$ does not contain a support of anti-canonical divisor. Indeed, assume that there exist an element $M_1\in |-K_X|$ such that $\Supp M_1\subseteq\Supp N$. Then $N=M_1+M_2$, where
$M_2\in|-3K_X|$. Note that $\dim |-K_X|=5$, $\dim |-3K_X|=30$. Put $d_1=\mult_P M_1$ and $d_2=\mult_P M_2$. Since $$\dim |-K_X|-\frac{d_1\cdot (d_1+1)}{2}=5-\frac{d_1\cdot (d_1+1)}{2}\geq 0$$ and $$\dim |-3K_X|-\frac{d_2\cdot (d_2+1)}{2}=30-\frac{d_2\cdot (d_2+1)}{2}\geq 0,$$ we see that $\mult_P M_1\leq 2$ and $\mult_P M_1\leq 7$. Hence, $\mult_P M_1=2$, $\mult_P M_2=7$. Let $\tilde{M}_1$ be the set of elements of the linear system $|-K_X|$ that have multiply $2$ in the point $P$, let $\tilde{M}_2$ be the set of elements of the linear system $|-3K_X|$ that have multiply $7$ in the point $P$. Note that $\tilde{M}_1$ and $\tilde{M}_2$ are the linear system. Then $\dim|\tilde{M}_1|=5-3=2$ $\dim|\tilde{M}_2|=30-28=2$. Hence, $$\dim|\tilde{M}_1+\tilde{M}_2|=4<5=\dim|\Omega|.$$ So, a general element $N$ of $\Omega$ does not contain the support of anti-canonical divisor. Then $\frac{1}{4}N$ is a tiger such that the support of this tiger does not contain any elements of $|-K_X|$.
\end{proof}

\begin{lemma}
\label{Lem4}
Let $X$ be a del Pezzo surface with du Val singularities and let $d$ be the degree of $X$. Assume that $d\geq3$ and there exist a singular point of type $A_1$. Then $X$ has a $-K_X$-polar cylinder and there exist a tiger such that the support of this tiger does not contain any elements of $|-K_X|$.
\end{lemma}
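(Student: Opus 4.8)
The plan is to follow the same strategy as in Lemmas \ref{Lem1}--\ref{Lem3}, but now exploiting the extra $A_1$ singularity rather than relying on a large degree. First, by Theorem \ref{chel1} the surface $X$ admits a $-K_X$-polar cylinder (degree $\geq 3$ with a singular point always falls into part II, or part I only in the cylinder-free subcases which have only $A_1$, $A_2$, $A_3$, $D_4$ on degree $1$, etc. — so one must be slightly careful, but for $d\geq 3$ with an $A_1$ point we are in case II). So the whole content is again the construction of a tiger whose support contains no element of $|-K_X|$.

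The idea is to pass to the minimal resolution $f:\bar X\to X$ and let $E$ be the $(-2)$-curve lying over the $A_1$ point, so $E^2=-2$, $K_{\bar X}\cdot E=0$. Following the pattern of Lemma \ref{Lem2}, I would write $-mK_{\bar X}\backsim aE+F$ for suitable small integers $a,m$ (for instance $m=2$, $a=1$ or $m=3$, $a=2$ tuned to the value of $d$), compute $F\cdot E$, $F\cdot K_{\bar X}$ and $F^2$ from the three relations obtained by intersecting with $E$, $K_{\bar X}$ and $F$, then apply Riemann--Roch (Theorem \ref{har1}) and Kawamata--Viehweg vanishing (Theorem \ref{Van1}) to get $\dim|F|$. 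Pick a general smooth point $P$ on $\bar X$ (we may even take $P$ on a $(-1)$-curve or generic, whichever makes the numerics work), let $\Omega\subseteq|F|$ be the sublinear system of members with $\mult_P\geq k$ for the appropriate threshold $k$, and check $\dim\Omega\geq 0$ so $\Omega\ne\emptyset$. Then, exactly as before, suppose every general $N\in\Omega$ had $\Supp M\subseteq \Supp(aE+N)$ for some $M\in|-K_{\bar X}|$; split into the finitely many cases according to how many copies of $E$ the divisor $M$ absorbs, in each case get numerical data for the residual system, bound $\dim$ of the relevant ``multiplicity $j$ at $P$'' sublinear systems by $\dim|\cdot|-\binom{j+1}{2}$, and compare with $\dim\Omega$ to reach a contradiction for all $d\geq 3$. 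Finally $\frac1m\bigl(f(N)+(a-?)E'\bigr)$ — concretely the pushforward of $\frac1m(aE+N)$ — is the desired tiger, since $\mult_P$ of $aE+N$ exceeds $m$ and the support avoids $|-K_X|$ by construction.

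The main obstacle is that for small $d$ (especially $d=3$) the dimension counts are tight, so the ``wasteful'' choice $m=2$, $a=1$ used for $d=4,6$ may not leave enough room; I expect to need $m=4$ (or to base the construction near the exceptional curve $E$ of the $A_1$ point and impose the singularity of the divisor at a point of $E'$ rather than at a general smooth point, so that passing through $E'$ already contributes multiplicity and lowers the effective threshold). The bookkeeping of the case split — in particular ruling out the possibility that the residual system is forced to contain $E$ with high multiplicity, which for a $(-2)$-curve does not immediately give the $F_1^2\leq -2$ contradiction that worked for the $(-1)$-curve in Lemma \ref{Lem2} — will require a separate argument, presumably showing such an $F_1$ would have to be a fixed component and then using that $E\cdot F_1$ is still small. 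Once the numerics are arranged for the smallest degree, the larger degrees follow a fortiori because $\dim\Omega$ grows faster in $d$ than the bounds on the obstructing sublinear systems, just as in the proofs above.
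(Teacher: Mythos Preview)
Your plan matches the paper's approach closely, but you have not yet committed to the choices that make the numerics go through, and you misidentify \emph{why} placing the base point on the exceptional curve is the right move.

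The paper first reduces to $d=3$ using Lemmas \ref{Lem1}--\ref{Lem3}, then takes $m=4$ and $a=3$: writing $-4K_{\bar X}\sim 3D_1+F$ with $D_1$ the $(-2)$-curve over the $A_1$ point, one gets $F\cdot D_1=6$, $F\cdot K_{\bar X}=-12$, $F^2=30$, hence $\dim|F|=21$, and one imposes $\mult_Q N\geq 6$ for $Q\in D_1$. The point you are hedging on --- whether to take $Q$ generic or on $E$ --- is not a matter of ``contributing multiplicity''. The decisive fact is that $-K_{\bar X}\cdot D_1=0$, so any $M_1\in|-K_{\bar X}|$ that does \emph{not} contain $D_1$ is actually disjoint from $D_1$, hence $\mult_Q M_1=0$. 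This kills the case ``$M_1$ contains no copy of $D_1$'' for free; the analogous case in Lemma~\ref{Lem2} required a dimension count precisely because a $(-1)$-curve meets $-K$ positively.

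Your worry that the ``$F_1^2\leq -2$ contradiction'' fails for a $(-2)$-curve is unfounded: if $M_1=aD_1+F_1\in|-K_{\bar X}|$ with $F_1$ effective, then $F_1^2=d-2a^2$, so for $d=3$ and $a\geq 2$ one gets $F_1^2\leq -5$, and the paper treats this as a contradiction exactly as before. Only the case $a=1$ survives both filters, and there a short dimension estimate ($\dim|F_1|=2$, so $\mult_Q F_1\leq 1$) finishes it. So the case split is actually cleaner here than in Lemma~\ref{Lem2}, not harder.
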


\begin{proof}
Let $X$ be a del Pezzo surface with du Val singularities, and let $P$ be a singular point of type $A_1$. By Lemmas \ref{Lem1}, \ref{Lem2} and \ref{Lem3} we may assume that $d=3$. Let $f:\bar{X}\rightarrow X$ be the minimal resolution of singularities of $X$, and let $D=\sum_{i=1}^n D_i$ be the exceptional divisor of $f$, where $D_i$ is a $(-2)$-curve. We may assume that $P=f(D_1)$. By Theorem \ref{chel1}, we see that $X$ has a $-K_X$-polar cylinder. Now, we construct a tiger such that the support of this tiger does not contain any elements of $|-K_X|$. Consider $-4K_{\bar{X}}$. Put $-4K_{\bar{X}}\backsim 3D_1+F$. Then $F\cdot D_1=6$, $F\cdot K_{\bar{X}}=-12$, $F^2=30$. Hence, $\dim|F|=21$. Let $Q$ be a point on $D_1$. Note that there exist an element $N\in |F|$ such that $\mult_Q N=6$. Now, we prove that $N+D_1$ does not contain the support of anti-canonical divisor. Indeed, assume that for all $N\in\Omega$ there exist an element $M_1\in |-K_{\bar{X}}|$ such that $\Supp M_1\subseteq\Supp (N+D_1)$. Then $N+3D_1=M_1+M_2$, where $M_2\in|-3K_{\bar{X}}|$. So, we have the following four cases.

Case 1. $M_2=3D_1+F_2$, $M_1$ does not contain the curve $D_1$. Then $F_2\cdot D_1=6$, $F_2\cdot K_X=-9$, $F_2^2=9$. Hence, $\dim|F_2|=9$. Therefore, $\mult_Q F_2\leq 3$. Since $M_1$ does not meet $D_1$, we have a contradiction.

Case 2. $M_1=D_1+F_1$, $M_2=2D_1+F_2$. Then $F_1\cdot D_1=2$, $F_1\cdot K_X=-d$, $F_1^2=1$. Therefore, $\dim|F_1|= 2$. Hence, $\mult_Q F_2\leq 1$, a contradiction.

Case 3. $M_1=2D_1+F_1$, $M_2=D_1+F_2$. Then $F_1\cdot D_1=4$, $F_1\cdot K_X=-3$, $F_1^2=-5$, a contradiction.

Case 4. $M_1=3D_1+F_2$, $M_2$ does not contain the curve $D_1$. Then $F_1\cdot D_1=6$, $F_1\cdot K_X=-3$, $F_1^2=-15$, a contradiction.

So, $\Supp(N+D_1)$ does not contain the support of anti-canonical divisor. Note that $\mult_Q(3D_1+N)=9$.  Then $\frac{1}{4} f(N)$ is a tiger such that the support of this tiger does not contain any elements of $|-K_X|$.
\end{proof}

\begin{lemma}
\label{LemA1A2}
Let $X$ be a del Pezzo surface with du Val singularities and let $d$ be the degree of $X$. Assume that $d\geq 2$ and there exist a singular point of type $A_2$ or $A_3$. Then $X$ has a $-K_X$-polar cylinder and there exist a tiger such that the support of this tiger does not contain any elements of $|-K_X|$.
\end{lemma}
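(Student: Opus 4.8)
The plan is to mimic the strategy already used in Lemmas \ref{Lem2} and \ref{Lem4}, working on the minimal resolution $f\colon\bar X\to X$ and exploiting the chain of $(-2)$-curves over the du Val point. By Lemmas \ref{Lem1}, \ref{Lem2} and \ref{Lem3} we may assume $d\in\{2,3\}$, and by Lemma \ref{Lem4} we may assume $X$ has no $A_1$-point; so we are reduced to a del Pezzo surface of degree $2$ or $3$ whose worst singularity at the chosen point $P$ is of type $A_2$ or $A_3$. That $X$ has a $-K_X$-polar cylinder is immediate from Theorem \ref{chel1} in all these cases (a surface of degree $2$ or $3$ with an $A_2$ or $A_3$ point is not on list I). So the content is the construction of a tiger whose support contains no element of $|-K_X|$.

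The construction: let $D_1,\dots,D_k$ ($k=2$ or $3$) be the $(-2)$-curves in the $A_k$-chain over $P$, with $D_1$ an end of the chain. Choose an integer $m$ and consider $-mK_{\bar X}$; write $-mK_{\bar X}\sim aD_1+F$ for a suitable coefficient $a$, so that $F\cdot D_1=2a-m$ (using $D_1^2=-2$, $D_1\cdot K_{\bar X}=0$), $F\cdot K_{\bar X}=-md$, $F^2=m^2d-4a(... )$ — the exact numbers I would compute case by case as in the previous lemmas — and by Theorem \ref{har1} together with Theorem \ref{Van1} on $\bar X$ (note $-K_{\bar X}$ is nef and big, and $F$ differs from it by an effective exceptional divisor) one gets $\dim|F|$. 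Then pick a general point $Q\in D_1$, let $\Omega\subseteq|F|$ be the sublinear system of elements with $\mult_Q\ge r$ for an $r$ chosen so that $\dim\Omega\ge 0$ and so that $aD_1+N$ is "enough for a tiger": concretely one wants $\frac1m f(aD_1+N)$, i.e. $\frac am E' + \frac1m f(N)$ where $E'=f(D_1)$, to fail log canonicity. The point is that $\mult_Q(aD_1+N)\ge a\cdot(\text{something})+r$ is large enough that after pushing forward the coefficient of $E'$ plus the multiplicity forces $(X,D)$ non-lc; I would fix $m$ (likely $m=4$ or $6$ for $d=2$, $m=4$ or $5$ for $d=3$) so the arithmetic works, exactly paralleling the choices $-3K$, $-4K$ made above.

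The heart of the argument, as in Lemmas \ref{Lem2} and \ref{Lem4}, is showing that a general $N\in\Omega$ can be chosen so that $\Supp(aD_1+N)$ contains no element of $|-K_{\bar X}|$. One assumes the contrary: every such $N$ has some $M_1\in|-K_{\bar X}|$ with $\Supp M_1\subseteq\Supp(aD_1+N)$, hence $aD_1+(a+?)\cdots$ — more precisely $N + aD_1 = M_1+M_2$ with $M_2\in|-(m-1)K_{\bar X}|$ — and one splits into cases according to how many copies of $D_1$ (and, with $A_2,A_3$, possibly the neighbouring curves $D_2,D_3$) appear in $M_1$ versus $M_2$. In each case write $M_i = b_iD_1 + (\text{chain tail}) + F_i$, compute $F_i\cdot D_1$, $F_i\cdot K_{\bar X}$, $F_i^2$; the cases with large $b_i$ give $F_i^2\le -2$ with $F_i$ effective and $D_1$-free, an immediate contradiction, exactly as "Case 1/3/4" above. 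The remaining one or two "balanced" cases are handled by the dimension count: the elements of $|F_i|$ through $Q$ with the forced multiplicity form a linear system $\tilde F_i$, and $\dim|\tilde F_1+\tilde F_2| = \dim|\tilde F_1|+\dim|\tilde F_2|$ turns out to be strictly less than $\dim\Omega$, so a general $N$ avoids all such decompositions.

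The main obstacle I anticipate is bookkeeping rather than conceptual: with an $A_3$-point the chain has three $(-2)$-curves, so "$M_1$ contains $D_1$" is no longer a single binary choice — $M_1$ may also drag in $D_2$ and $D_3$ — and one must either argue that only the coefficient along $D_1$ matters for the intersection numbers with $D_1$ (true, since $D_i\cdot D_1=0$ for $i\ge 3$ and $=1$ for $i=2$), or enlarge the case analysis to track $\mathrm{mult}_Q$ only along $D_1$ while absorbing the rest of the exceptional chain into an effective $D_1$-free remainder whose self-intersection one bounds above. A secondary technical point is the correct application of Kawamata--Viehweg on $\bar X$: one needs $-mK_{\bar X}-aD_1$ (or the relevant $F_i$) to differ from $K_{\bar X}+(\text{ample})$ appropriately; since $-K_{\bar X}=f^*(-K_X)$ is nef and big and the $D_i$ are contracted, $H^1$ of the relevant sheaves vanishes and the stated dimensions follow, but this must be checked so that the inequalities $\dim\Omega > \dim|\tilde F_1+\tilde F_2|$ are genuinely strict in every surviving case. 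Once the arithmetic closes, $\frac1m f(aD_1+N)$ — equivalently $\frac am f(D_1)+\frac1m f(N)$ — is the desired tiger.
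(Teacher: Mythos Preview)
Your overall plan—pass to the minimal resolution, load coefficients on the exceptional $(-2)$-curves, impose multiplicity at a point, and rule out an anticanonical subcurve by a dimension count or a self-intersection contradiction—is exactly the paper's strategy. But you miss the one idea that makes the $A_2$/$A_3$ case work for $d=2$, and without it your arithmetic will not close.

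The paper does \emph{not} take $Q$ to be a general point on a single end-curve $D_1$; it takes $Q=D_1\cap D_2$, the node of the chain, and writes $-2K_{\bar X}\sim 2D_1+2D_2+F$ (for $A_2$) or $-2K_{\bar X}\sim 2D_1+2D_2+D_3+F$ (for $A_3$). At that point \emph{both} $D_1$ and $D_2$ pass through $Q$, so $2D_1+2D_2$ already contributes multiplicity $4$; one only needs $N\in|F|$ with $Q\in N$ (a single linear condition) to reach $\mult_Q\ge 5>2\cdot 2$, and $\dim|F|=3d-4$ (resp.\ $3d-3$) suffices even for $d=2$. The anticanonical-exclusion step is then a one-line comparison in the $A_2$ case and automatic in the $A_3$ case (the asymmetric weights $2,2,1$ cannot be split as $M_1+M_2$ with $M_i\in|{-}K_{\bar X}|$). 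No large $m$, no high-order multiplicity conditions, no multi-case decomposition.

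Your scheme, by contrast, puts $Q$ generically on $D_1$ and asks for $\mult_Q N\ge r$ with $a+r>2m$. Two problems. First, as written, $-mK_{\bar X}\sim aD_1+F$ forces $F\cdot D_2=-a<0$, so $D_2$ (and then $D_3$) is a base component of $|F|$; you must build $D_2,D_3$ into the fixed part from the start, not only into the later $M_1/M_2$ bookkeeping. Second, once you do that, for $d=2$ the numbers do not work: with $-mK_{\bar X}\sim aD_1+bD_2+F$ one has $\dim|F|=m^2-(a^2-ab+b^2)+m$, while the condition $\mult_Q\ge r$ costs $\binom{r+1}{2}$ and you need $a+r>2m$. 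Optimising over $a,b,r$ (set $a=\alpha m$, $b=\alpha m/2$, $r=\rho m$) gives $\alpha+\rho\le \alpha+\sqrt{2-3\alpha^2/2}$, whose maximum is about $1.83<2$, so no choice of $m$ rescues the inequality. In short, mimicking the $A_1$ template (Lemma~\ref{Lem4}) cannot reach $d=2$; the intersection-point trick is the missing ingredient.
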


\begin{proof}
As above, we may assume that $d=2$ or $d=3$. By Theorem \ref{chel1}, we see that $X$ contains $-K_X$-polar cylinder. Let $f:\bar{X}\rightarrow X$ be the minimal resolution of singularities of $X$, and let $D=\sum_{i=1}^n D_i$ be the exceptional divisor of $f$, where $D_i$ is a $(-2)$-curve. Consider two cases.

Case 1. There exist a point $P\in X$ such that $P$ of type $A_2$. We may assume that $D_1$ and $D_2$ correspond to $P$. So, $D_1\cdot D_2=1$. Let $Q$ be the point of intersection of $D_1$ and $D_2$. Consider $-2K_{\bar{X}}$. Put $-2K_{\bar{X}}\backsim 2D_1+2D_2+F$. Then $F\cdot D_1=F\cdot D_2=2$, $F\cdot K_{\bar{X}}=-2d$, $F^2=4d-8$. Hence, $\dim|F|=3d-4$. Consider the set $\Omega$ of elements $L\in|F|$ such that $Q\in L$. Then $\dim\Omega =3d-4-1=3d-5$. Put $-K_{\bar{X}}\backsim D_1+D_2+\tilde{F}$. Then $\tilde{F}\cdot D_1=\tilde{F}\cdot D_2=1$, $\tilde{F}\cdot K_{\bar{X}}=-d$, $\tilde{F}^2=d-2$. Hence, $|\tilde{F}|=d-1$. Consider the set $\tilde{\Omega}$ of elements $L\in|\tilde{F}|$ such that $Q\in L$. Then $\dim\tilde{\Omega}=d-2$. Note that $\dim\Omega=3d-5>\dim\tilde{\Omega}=d-2$. So, the there exist an element $N$ of $\Omega$ such that $f(N)$ does not contain the support of anti-canonical divisor. Note that $\mult_Q(2D_1+2D_2+N)\geq 5$. Then $\frac{1}{2} f(N)$ is a tiger such that the support of this tiger does not contain any elements of $|-K_X|$.

Case 2. There exist a point $P\in X$ such that $P$ of type $A_3$. We may assume that $D_1$, $D_2$ and $D_3$ correspond to $P$. So, $D_1\cdot D_2=D_2\cdot D_3=1$. Let $Q$ be the point of intersection of $D_1$ and $D_2$. Consider $-2K_{\bar{X}}$. Put $-2K_{\bar{X}}\backsim 2D_1+2D_2+D_3+F$. Then $F\cdot D_1=2$, $F\cdot D_2=1$, $F\cdot D_3=0$ $F\cdot K_{\bar{X}}=-2d$, $F^2=4d-6$. Hence, $\dim|F|=3d-3$.
So, there exist an element $N$ of $|F|$ such that $Q\in N$. Note that the support $N+2D_1+2D_2+D_3$ does not contain any elements $|-K_{\bar{X}}|$. So, $f(N)$ does not contain the support of anti-canonical divisor. Note that $\mult_Q(2D_1+2D_2+D_3+N)\geq 5$. Then $\frac{1}{2} f(N)$ is a tiger such that the support of this tiger does not contain any elements of $|-K_X|$.
\end{proof}

\begin{lemma}
\label{LemD4}
Let $X$ be a del Pezzo surface with du Val singularities and let $d$ be the degree of $X$. Assume that $d\geq 2$ and there exist a singular point of type $D_4$. Then $X$ has a $-K_X$-polar cylinder and there exist a tiger such that the support of this tiger does not contain any elements of $|-K_X|$.
\end{lemma}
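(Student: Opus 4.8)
The plan is to follow the template of Lemmas \ref{Lem1}--\ref{LemA1A2}. By Lemmas \ref{Lem1}, \ref{Lem2} and \ref{Lem3} we may assume $d=2$ or $d=3$, and Theorem \ref{chel1} already gives a $-K_X$-polar cylinder, so all that is left is to construct a tiger whose support contains no member of $|-K_X|$. Let $f\colon\bar X\to X$ be the minimal resolution and $D_1,D_2,D_3,D_4$ the $(-2)$-curves over the $D_4$ point, with $D_1$ the central one, so that $D_1\cdot D_i=1$ for $i=2,3,4$ and $D_i\cdot D_j=0$ for $2\le i<j\le 4$. Fix the point $Q=D_1\cap D_2$, which is a smooth point of $\bar X$.

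I would then set $-2K_{\bar X}\backsim 2D_1+2D_2+D_3+D_4+F$. Using $K_{\bar X}\cdot D_i=0$ and $K_{\bar X}^2=d$ one computes $F\cdot D_1=F\cdot D_3=F\cdot D_4=0$, $F\cdot D_2=2$, $F\cdot K_{\bar X}=-2d$ and $F^2=4d-4$; since $K_{\bar X}-F$ has negative intersection with the nef divisor $-K_{\bar X}$ we get $h^2(\bar X,F)=0$, so Theorem \ref{har1} gives $\dim|F|\ge 3d-2\ge 4$. Let $\Omega\subseteq|F|$ be the sublinear system of members passing through $Q$; then $\dim\Omega\ge 3d-3\ge 3$, so $\Omega$ is non-empty, and a general $N\in\Omega$ is an irreducible curve through $Q$ not contracted by $f$. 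Since $f$ is crepant, $f_*F\backsim -2K_X$, hence $\frac12 f(N)\equiv -K_X$, and $f^*(\frac12 f(N))=\frac12 N+D_1+D_2+\frac12 D_3+\frac12 D_4$, a divisor of multiplicity at least $\frac12+1+1=\frac52>2$ at the smooth point $Q$. Therefore $(X,\frac12 f(N))$ is not log canonical at $f(Q)$.

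It remains to check that $\Supp(\frac12 f(N))$ contains no element of $|-K_X|$. If it did, then --- $K_X$ being Cartier since du Val singularities are Gorenstein --- pulling back along $f$ would produce $M\in|-K_{\bar X}|$ supported on $\Supp N$ together with exceptional curves of $f$. As $N$ is irreducible and $-K_{\bar X}\cdot D_j=0$ for every exceptional curve $D_j$, such an $M$ has the form $M=aN+\sum_j b_jD_j$ with $a,b_j$ non-negative integers, whence $d=(-K_{\bar X})^2=-K_{\bar X}\cdot M=a\,(-K_{\bar X}\cdot N)=2ad$, using $-K_{\bar X}\cdot N=-K_{\bar X}\cdot F=2d$; this is impossible. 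Hence $\frac12 f(N)$ is the desired tiger, and the lemma follows.

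The step I expect to be the main obstacle is ensuring that a general $N\in\Omega$ really is irreducible, i.e. that $|F|$ has no fixed component; the danger is a $(-1)$-curve $C$ meeting $D_1$ together with one of the legs (for which $F\cdot C<0$, forcing $C$ into the base locus), and one must either rule out such curves on the minimal resolution of a degree $2$ or $3$ del Pezzo surface with a $D_4$ point, or split such a $C$ off and verify by a short intersection computation that no anticanonical divisor can still sit in the support. This, together with the fact that the chosen coefficients $2,2,1,1$ on the $D_i$ are what make the numerical identity $d=2ad$ unsolvable, is where the restriction $d\in\{2,3\}$ is genuinely used.
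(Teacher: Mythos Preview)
Your overall strategy matches the paper's template, but with a different weighting: you take $-2K_{\bar X}\sim 2D_1+2D_2+D_3+D_4+F$, whereas the paper takes $-3K_{\bar X}\sim 4D_1+3D_2+2D_3+2D_4+F$. The paper's choice has a concrete advantage: since $4+3=7>6$, the multiplicity of $4D_1+3D_2+2D_3+2D_4+N$ at $Q=D_1\cap D_2$ is already $\geq 7$ from the exceptional part alone, so any $N\in|F|$ works and there is no need to pass to the sublinear system $\Omega$ of members through $Q$. That removes one source of possible fixed components from the outset.

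The gap you flag is genuine and is the heart of the matter. Your proof that $\Supp(\tfrac12 f(N))$ contains no anticanonical divisor rests entirely on $N$ being irreducible: if $N$ is reducible, a proper component of $N$ together with some exceptional curves could lie in $|-K_{\bar X}|$, and the neat identity $d=2ad$ no longer applies. Showing a general $N\in\Omega$ is irreducible amounts to ruling out fixed components of $|F|$; as you observe, a $(-1)$-curve $C$ with, say, $D_1\cdot C=D_3\cdot C=1$ has $F\cdot C<0$, and such curves do appear on the minimal resolutions of degree $2$ and $3$ del Pezzo surfaces with a $D_4$ point. So the case check you anticipate is real work, not a formality.

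The paper sidesteps this entirely: with the coefficients $4,3,2,2$ it argues that the effective divisor $4D_1+3D_2+2D_3+2D_4+N$ admits no splitting $M_1+M_2$ with $M_1\in|-K_{\bar X}|$, $M_2\in|-2K_{\bar X}|$, a combinatorial statement about the $D_i$-coefficients (via $M_k\cdot D_i=0$) that needs nothing about the shape of $N$. If you prefer to keep your coefficients $2,2,1,1$, the clean fix is not irreducibility but a dimension count in the style of Lemmas~\ref{Lem1}--\ref{Lem3}: the constraints $M_k\cdot D_i=0$ force any splitting $M_1+M_2=2D_1+2D_2+D_3+D_4+N$ with $M_1,M_2\in|-K_{\bar X}|$ to satisfy (up to symmetry) $M_1\leq N$ with $M_1$ disjoint from $D_1,\dots,D_4$, and the locus of such $N$ in $\Omega$ has dimension at most $\dim|{-K_{\bar X}}|+\dim|F+K_{\bar X}|=d+(d-2)=2d-2<3d-3\leq\dim\Omega$, so a general $N$ avoids it.
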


\begin{proof}
Let $X$ be a del Pezzo surface with du Val singularities, and let $P$ be a singular point of type $D_4$. By Theorem \ref{chel1}, we see that $X$ has a $-K_X$-polar cylinder. Let $f:\bar{X}\rightarrow X$ be the minimal resolution of singularities of $X$, and let $D=\sum_{i=1}^n D_i$ be the exceptional divisor of $f$, where $D_i$ is a $(-2)$-curve. We may assume that $D_1$, $D_2$, $D_3$ and $D_4$ correspond to $P$. Moreover, $D_1$ is the central component. Put $-3K_{\bar{X}}\backsim 4D_1+3D_2+2D_3+2D_4+F$. Then $F\cdot D_1=1$, $F\cdot D_2=2$, $F\cdot D_3=F\cdot D_4=0$ $F\cdot K_{\bar{X}}=-3d$, $F^2=9d-10>0$ for $d\geq 2$. Note that $4D_1+3D_2+2D_3+2D_4+F$ does not admit representation as $M_1+M_2$, where $M_1\in|-K_{\bar{X}}|$ and $M_2\in|-2K_{\bar{X}}|$. Let $N$ be an element of $|F|$. Note that the support $N+4D_1+3D_2+2D_3+2D_4$ does not contain any elements $|-K_{\bar{X}}|$. So, $f(N)$ does not contain the support of anti-canonical divisor. Note that $\mult_Q(4D_1+3D_2+2D_3+2D_4+N)\geq 7$, where $Q$ is the intersection of $D_1$ and $D_2$. Then $\frac{1}{3} f(N)$ is a tiger such that the support of this tiger does not contain any elements of $|-K_X|$.
\end{proof}

\begin{lemma}
\label{LemA4-8}
Let $X$ be a del Pezzo surface with du Val singularities and let $d$ be the degree of $X$. Assume that there exist a singular point of type $A_k$, where $k=4,5,6,7,8$. Then $X$ has a $-K_X$-polar cylinder and there exist a tiger such that the support of this tiger does not contain any elements of $|-K_X|$.
\end{lemma}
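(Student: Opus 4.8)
\emph{Proof proposal.} The cylinder assertion is free: a del Pezzo surface possessing a singular point of type $A_k$ with $k\geq 4$ never appears in list I of Theorem \ref{chel1} (for $d=1$ only $A_1,A_2,A_3,D_4$ are allowed, for $d=2$ only $A_1$, and for $d=3$ no singular point), so $X$ has a $-K_X$-polar cylinder. For the tiger, the first move is to invoke Lemmas \ref{Lem1}, \ref{Lem2}, \ref{Lem3} to reduce to $d\leq 3$; this is consistent with the hypothesis, since on a del Pezzo surface $A_5$ forces $d\leq 3$, $A_6$ and $A_7$ force $d\leq 2$, and $A_8$ forces $d=1$.

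The construction then follows the template of Lemmas \ref{LemD4} and \ref{LemA1A2}. Let $f:\bar X\to X$ be the minimal resolution and $D_1-D_2-\cdots-D_k$ the chain of $(-2)$-curves over the $A_k$ point $P$, so $D_i\cdot K_{\bar X}=0$, $D_i^2=-2$ and $D_i\cdot D_{i+1}=1$. For a suitable integer $m\geq 2$ (the analogue of $m=2$ for $A_2,A_3$ and of $m=3$ for $D_4$), choose nonnegative integers $a_1\geq a_2\geq\cdots\geq a_k$ that grow roughly linearly from each end of the chain and plateau in the middle — the shape $4,3,2,2$ used for $D_4$ — and put $F:=-mK_{\bar X}-\sum_i a_iD_i$, so that $-mK_{\bar X}\sim\sum_i a_iD_i+F$. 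The $a_i$ are to be selected so that each $F\cdot D_i$ is a small nonnegative number; then $F$ is nef, and Theorem \ref{har1} together with Theorem \ref{Van1} computes $\dim|F|$ from $F\cdot K_{\bar X}=mK_{\bar X}^2=md$ and $F^2$. Taking $Q=D_1\cap D_2$ and letting $\Omega\subseteq|F|$ be the subsystem of members with a prescribed multiplicity at $Q$, a dimension count should show $\Omega\neq\emptyset$, with positive-dimensional room to spare.

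The heart of the matter, and the step I expect to be the main obstacle, is to show that a general $N\in\Omega$ has the property that $\sum_i a_iD_i+N$ contains no member of $|-K_{\bar X}|$ in its support. Arguing as in Lemma \ref{Lem2}, one assumes the contrary and writes $\sum_i a_iD_i+N=M_1+M_2$ with $M_1\in|-K_{\bar X}|$ and $M_2\in|-(m-1)K_{\bar X}|$, and then runs through the finitely many ways in which the chain components $D_1,\dots,D_k$ can be distributed between $M_1$ and $M_2$. Using $D_i\cdot K_{\bar X}=0$ and the chain intersections, each case either yields an immediate numerical or genus contradiction (a putative effective summand $F_j$ with $F_j^2$ too negative, exactly as in Cases 1, 3, 4 of Lemma \ref{LemD4}) or reduces to a linear-subsystem inequality of the form $\dim|\tilde M_1+\tilde M_2|<\dim|\Omega|$ coming from the forced multiplicities at $Q$, as in Case 2 of Lemmas \ref{Lem2} and \ref{LemA1A2}; the difficulty is not conceptual but bookkeeping, and it must be carried out for every $k\in\{4,5,6,7,8\}$ and every admissible $d\leq 3$. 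Granting this, $\mult_Q\!\big(\sum_i a_iD_i+N\big)$ is large enough that, pulling back to $\bar X$, the pair $\big(X,\tfrac1m f(N)\big)$ is not log canonical at $P$ — the $(-2)$-chain supplies the extra multiplicity after contraction — so $\tfrac1m f(N)\equiv -K_X$ is a tiger whose support contains no element of $|-K_X|$, as required.
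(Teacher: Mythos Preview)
Your outline matches the paper's approach: minimal resolution, write $-mK_{\bar X}\sim\sum a_iD_i+F$ for a carefully chosen $m$ and coefficients $a_i$, pick $N\in|F|$ through a node $Q$ of the chain, check that $\sum a_iD_i+N$ cannot split off a member of $|-K_{\bar X}|$, and conclude that $\tfrac1m f(N)$ is the desired tiger.

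Two points of comparison. First, your description of the $a_i$ is internally inconsistent (you write $a_1\geq\cdots\geq a_k$ but also ``plateau in the middle''), and your choice $Q=D_1\cap D_2$ is not the paper's: the paper takes $Q$ at the \emph{centre} of the chain and uses symmetric tent-shaped coefficients, namely
\[
\begin{aligned}
k=4:&\quad -2K_{\bar X}\sim D_1+2D_2+2D_3+D_4+F,\ Q=D_2\cap D_3;\\
k=5:&\quad -3K_{\bar X}\sim D_1+2D_2+3D_3+3D_4+2D_5+F,\ Q=D_3\cap D_4;\\
k=6:&\quad -3K_{\bar X}\sim D_1+2D_2+3D_3+3D_4+2D_5+D_6+F;\\
k=7:&\quad -4K_{\bar X}\sim D_1+2D_2+3D_3+4D_4+4D_5+3D_6+2D_7+F;\\
k=8:&\quad -4K_{\bar X}\sim D_1+2D_2+3D_3+4D_4+4D_5+3D_6+2D_7+D_8+F.
\end{aligned}
\]
This symmetric shape makes $F\cdot D_i\in\{0,1\}$ with the nonzero intersections concentrated near the middle, and $\mult_Q(\sum a_iD_i+N)\geq 2m+1$ follows immediately. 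Second, the detailed case-by-case decomposition analysis you anticipate is \emph{not} carried out in the paper: for each $k$ the paper simply asserts that $\sum a_iD_i+N$ ``does not admit representation as $M_1+M_2$'' with $M_1\in|-K_{\bar X}|$, $M_2\in|-(m-1)K_{\bar X}|$, and moves on. The point is that with these particular coefficients the obstruction is visible from the intersection pattern $F\cdot D_i$ alone (any $M_1\in|-K_{\bar X}|$ must satisfy $M_1\cdot D_i=0$, which forces its $D_i$-coefficients into a shape incompatible with subtracting them from the tent), so no subsystem dimension count of the Lemma~\ref{Lem2} type is needed here.
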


\begin{proof}
Let $X$ be a del Pezzo surface with du Val singularities, and let $P$ be a singular point of type $A_k$. By Theorem \ref{chel1}, we see that $X$ has a $-K_X$-polar cylinder. Let $f:\bar{X}\rightarrow X$ be the minimal resolution of singularities of $X$, and let $D=\sum_{i=1}^n D_i$ be the exceptional divisor of $f$, where $D_i$ is a $(-2)$-curve. We may assume that $D_1, D_2,\ldots, D_k$ correspond to $P$. Moreover, $D_i\cdot D_{i+1}=1$ for $i=1,2,\ldots,k-1$. Consider the following cases.

Case 1. $k=4$. Put $-2K_{\bar{X}}\backsim D_1+2D_2+2D_3+D_4+F$. Let $Q$ be the intersection of $D_2$ and $D_3$. We obtain $F\cdot D_1=F\cdot D_4=0$, $F\cdot D_2=F\cdot D_3=1$, $F\cdot K_{\bar{X}}=-2d$, $F^2=4d-4$. Then $\dim|F|=3d-2$. So, there exist an element $N\in|F|$ such that $N$ pass through $Q$. Note that $D_1+2D_2+2D_3+D_4+N$ does not admit representation as $M_1+M_2$, where $M_1, M_2\in|-K_{\bar{X}}|$. Hence, $f(N)$ does not contain the support of anti-canonical divisor. Note that $\mult_Q(D_1+2D_2+2D_3+D_4+N)\geq 5$. Then $\frac{1}{2} f(N)$ is a tiger such that the support of this tiger does not contain any elements of $|-K_X|$.

Case 2. $k=5$. Put $-3K_{\bar{X}}\backsim D_1+2D_2+3D_3+3D_4+2D_5+F$. Let $Q$ be the intersection of $D_3$ and $D_4$. We obtain $F\cdot D_1=F\cdot D_2=0$, $F\cdot D_3=F\cdot D_4=F\cdot D_5=1$, $F\cdot K_{\bar{X}}=-3d$, $F^2=9d-8$. Then $\dim|F|=6d-4$. So, there exist an element $N\in|F|$ such that $N$ pass through $Q$. Note that $D_1+2D_2+3D_3+3D_4+2D_5+N$ does not admit representation as $M_1+M_2$, where $M_1\in|-K_{\bar{X}}|$ and $M_2\in|-2K_{\bar{X}}|$. Hence, $f(N)$ does not contain the support of anti-canonical divisor. Note that $\mult_Q(D_1+2D_2+3D_3+3D_4+2D_5+N)\geq 7$.  Then $\frac{1}{3} f(N)$ is a tiger such that the support of this tiger does not contain any elements of $|-K_X|$.

Case 3. $k=6$. Put $-3K_{\bar{X}}\backsim D_1+2D_2+3D_3+3D_4+2D_5+D_6+F$. Let $Q$ be the intersection of $D_3$ and $D_4$. We obtain $$F\cdot D_1=F\cdot D_2=F\cdot D_5=F\cdot D_6=0,$$ $$F\cdot D_3=F\cdot D_4=1, F\cdot K_{\bar{X}}=-3d, F^2=9d-6.$$ Then $\dim|F|=6d-3$. So, there exist an element $N\in|F|$ such that $N$ pass through $Q$. Note that $D_1+2D_2+3D_3+3D_4+2D_5+N$ does not admit representation as $M_1+M_2$, where $M_1\in|-K_{\bar{X}}|$ and $M_2\in|-2K_{\bar{X}}|$. Hence, $f(N)$ does not contain the support of anti-canonical divisor. Note that $\mult_Q(D_1+2D_2+3D_3+3D_4+2D_5+D_6+N)\geq 7$.  Then $\frac{1}{3} f(N)$ is a tiger such that the support of this tiger does not contain any elements of $|-K_X|$.

Case 4. $k=7$. Put $$-4K_{\bar{X}}\backsim D_1+2D_2+3D_3+4D_4+4D_5+3D_6+2D_7+F.$$ Let $Q$ be the intersection of $D_4$ and $D_5$. We obtain $$F\cdot D_1=F\cdot D_2=F\cdot D_3=F\cdot D_6=0,$$ $$F\cdot D_4=F\cdot D_5=F\cdot D_7=1, F\cdot K_{\bar{X}}=-4d, F^2=16d-10.$$ Then $\dim|F|=10d-5$. So, there exist an element $N\in|F|$ such that $N$ pass through $Q$. Note that $D_1+2D_2+3D_3+4D_4+4D_5+3D_6+2D_7+N$ does not admit representation as $M_1+M_2$, where $M_1\in|-K_{\bar{X}}|$ and $M_2\in|-3K_{\bar{X}}|$. Hence, $f(N)$ does not contain the support of anti-canonical divisor. Note that $$\mult_Q(D_1+2D_2+3D_3+4D_4+4D_5+3D_6+2D_7+N)\geq 9.$$ Then $\frac{1}{4} f(N)$ is a tiger such that the support of this tiger does not contain any elements of $|-K_X|$.

Case 5. $k=8$. Put $$-4K_{\bar{X}}\backsim D_1+2D_2+3D_3+4D_4+4D_5+3D_6+2D_7+D_8+F.$$ Let $Q$ be the intersection of $D_4$ and $D_5$. We obtain $$F\cdot D_1=F\cdot D_2=F\cdot D_3=F\cdot D_6=F\cdot D_7=F\cdot D_8=0,$$ $$F\cdot D_4=F\cdot D_5=1, F\cdot K_{\bar{X}}=-4d, F^2=16d-8.$$ Then $\dim|F|=10d-4$. So, there exist an element $N\in|F|$ such that $N$ pass through $Q$. Note that $D_1+2D_2+3D_3+4D_4+4D_5+3D_6+2D_7+D_8+N$ does not admit representation as $M_1+M_2$, where $M_1\in|-K_{\bar{X}}|$ and $M_2\in|-3K_{\bar{X}}|$. Hence, $f(N)$ does not contain the support of anti-canonical divisor. Note that $$\mult_Q(D_1+2D_2+3D_3+4D_4+4D_5+3D_6+2D_7+D_8+N)\geq 9.$$ Then $\frac{1}{4} f(N)$ is a tiger such that the support of this tiger does not contain any elements of $|-K_X|$.
\end{proof}

\begin{lemma}
\label{LemD5-8}
Let $X$ be a del Pezzo surface with du Val singularities and let $d$ be the degree of $X$. Assume that there exist a singular point of type $D_k$, where $k=5,6,7,8$. Then $X$ has a $-K_X$-polar cylinder and there exist a tiger such that the support of this tiger does not contain any elements of $|-K_X|$.
\end{lemma}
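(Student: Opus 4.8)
The plan is to imitate the proofs of Lemmas \ref{LemD4} and \ref{LemA4-8}. First note that a del Pezzo surface with a singular point of type $D_k$, $k\geq 5$, is not one of the surfaces listed in part~I of Theorem \ref{chel1}: that list only allows singularities of types $A_1,A_2,A_3,D_4$ and degree $d\leq 3$. Hence by Theorem \ref{chel1} the surface $X$ already has a $-K_X$-polar cylinder, and it remains to construct a tiger whose support contains no element of $|-K_X|$. Let $f\colon\bar X\to X$ be the minimal resolution, $D=\sum_i D_i$ its exceptional divisor (a union of $(-2)$-curves), and let $D_1,\dots,D_k$ be the curves over the $D_k$-point $P$, labelled so that $D_1$ is the trivalent component, $D_2,D_3$ the two short legs meeting $D_1$, and $D_4,D_5,\dots,D_k$ the long leg with $D_i\cdot D_{i+1}=1$ (for $k=4$ this specialises to the labelling of Lemma \ref{LemD4}).

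For each $k\in\{5,6,7,8\}$ I would fix an integer $m=m(k)$ and non-negative integers $a_1,\dots,a_k$ forming a ``tent'' over the $D_k$-diagram, largest at $D_1$ and decreasing along each leg, chosen as large as the constraints $2a_i\geq\sum_{j:\,D_j\cdot D_i=1}a_j$ (one for each $i$) permit, and set $F:=-mK_{\bar X}-\sum_{i=1}^k a_iD_i$. Exactly as in Lemma \ref{LemD4} (where $m=3$, $(a_1,\dots,a_4)=(4,3,2,2)$) and in the cases $k=5,6,7,8$ of Lemma \ref{LemA4-8}, this makes $F\cdot D_i\geq 0$ for all $i$; one then computes $F\cdot D_i$, $F\cdot K_{\bar X}=-md$ and $F^2=m^2d-\sum_i a_i(F\cdot D_i)$, and concludes from Theorem \ref{har1} and Theorem \ref{Van1} that $\dim|F|=\tfrac12 F(F-K_{\bar X})$ is large. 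In particular $|F|\neq\emptyset$, and one may pick a general $N\in|F|$ passing through the point $Q$ where the two curves of $D_1,\dots,D_k$ carrying the two largest coefficients meet (for instance $Q=D_1\cap D_2$).

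Next I would check that $\Supp f(N)$ contains no element of $|-K_X|$, equivalently that $\sum_i a_iD_i+N\sim -mK_{\bar X}$ admits no decomposition $M_1+M_2$ with $M_1\in|-K_{\bar X}|$, $M_2\in|-(m-1)K_{\bar X}|$ (and, when $m=2$, with both summands anticanonical). This is done exactly as in Lemmas \ref{Lem2}, \ref{Lem4} and \ref{LemD4}: run through the finitely many possibilities for the multiplicities with which $D_1$ and the remaining $D_i$ occur in $M_1$ and $M_2$; writing a summand as $M_\ell=(\text{multiplicities})\cdot D_i+F_\ell$ with $F_\ell$ containing no $D_i$, the numbers $F_\ell\cdot D_i$ are forced, hence so are $F_\ell\cdot K_{\bar X}$ and $F_\ell^2$, and in each case one gets either an effective $F_\ell$ with $F_\ell^2\leq -2$, which is impossible, or else a dimension count $\dim|\tilde M_1+\tilde M_2|<\dim|\Omega|$ (as in Cases~2 and~3 of Lemma \ref{Lem2}) showing that a general $N$ avoids the decomposition. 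I expect this case analysis to be the main obstacle: the long leg of $D_k$ lengthens the list of cases beyond that for $D_4$, and the integers $m(k)$ and $a_i$ must be calibrated so that simultaneously $|F|\neq\emptyset$, the decomposition is excluded, and the multiplicity bound below holds; I would carry this out separately for $k=5,6,7,8$.

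Finally, by the choice of coefficients $\mult_Q\bigl(\sum_i a_iD_i+N\bigr)\geq a_1+a_2>2m$ (with $Q=D_1\cap D_2$ and $a_1,a_2$ the two largest coefficients; the term $N$ only helps). Since the singularities of $X$ are du Val we have $K_{\bar X}=f^*K_X$, and for general $N$ (with no exceptional component) $f^*f(N)=N+\sum_i a_iD_i$, because $(N+\sum_i a_iD_i)\cdot D_j=F\cdot D_j+(\sum_i a_iD_i)\cdot D_j=0$ for every $j$ and the intersection form on the $D_j$ is nondegenerate. Thus $f^*f(N)$ has multiplicity $>2m$ at the smooth point $Q$, so $\bigl(\bar X,\tfrac1m f^*f(N)\bigr)$ is not log canonical at $Q$, and therefore $\bigl(X,\tfrac1m f(N)\bigr)$ is not log canonical. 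As $f_*N\sim -mK_X$, the divisor $\tfrac1m f(N)$ is a tiger, and by the previous step its support contains no element of $|-K_X|$, which completes the proof.
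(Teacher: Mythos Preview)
Your plan is correct and matches the paper's approach closely. The paper carries out exactly this scheme with explicit data: with $D_3$ denoting the trivalent vertex and $D_4,\dots,D_k$ the long leg, it takes
\[
\begin{aligned}
k=5:&\quad m=2,\ \ \textstyle\sum a_iD_i=2D_1+2D_2+3D_3+2D_4+D_5,\\
k=6:&\quad m=2,\ \ \textstyle\sum a_iD_i=2D_1+2D_2+4D_3+3D_4+2D_5+D_6,\\
k=7:&\quad m=3,\ \ \textstyle\sum a_iD_i=3D_1+3D_2+6D_3+5D_4+4D_5+3D_6+2D_7,\\
k=8:&\quad m=3,\ \ \textstyle\sum a_iD_i=3D_1+3D_2+6D_3+5D_4+4D_5+3D_6+2D_7+D_8,
\end{aligned}
\]
and always places $Q=D_3\cap D_4$ (central component meets the long leg), not a short leg. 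Two simplifications relative to what you anticipate: first, you never need to force $N$ through $Q$, since the fixed part alone already gives $\mult_Q(\sum a_iD_i)\in\{5,7,11,11\}>2m$; second, the non-decomposability into $M_1+M_2$ is \emph{not} the main obstacle here. Unlike Lemmas~\ref{Lem2} and~\ref{Lem4}, no dimension count is needed: the coefficient of the terminal vertex of the long leg is $1$, so in any splitting one summand has this coefficient $0$, and chasing the conditions $M\cdot D_j=0$ down the long leg forces all its $D_i$-coefficients to vanish, which then contradicts the coefficient pattern at the trivalent vertex. The paper simply asserts non-decomposability in one line for each $k$. Your final paragraph, identifying $f^*f_*N=N+\sum a_iD_i$ via $(N+\sum a_iD_i)\cdot D_j=0$, is a cleaner justification of why the multiplicity bound on $\bar X$ yields a tiger on $X$ than the paper itself gives.
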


\begin{proof}
Let $X$ be a del Pezzo surface with du Val singularities, and let $P$ be a singular point of type $D_k$. By Theorem \ref{chel1}, we see that $X$ has a $-K_X$-polar cylinder. Let $f:\bar{X}\rightarrow X$ be the minimal resolution of singularities of $X$, and let $D=\sum_{i=1}^n D_i$ be the exceptional divisor of $f$, where $D_i$ is a $(-2)$-curve. We may assume that $D_1, D_2,\ldots, D_k$ correspond to $P$. Moreover, $D_3$ is the central component, $D_1$, $D_2$ meet only $D_3$, and $D_i\cdot D_{i+1}=1$ for $i=3,4,\ldots,k-1$. Consider the following cases.

Case 1. $k=5$. Put $-2K_{\bar{X}}\backsim 2D_1+2D_2+3D_3+2D_4+D_5+F$. Then $F\cdot D_1=F\cdot D_2=1$, $F\cdot D_3=F\cdot D_4=F\cdot D_5=0$, $F\cdot K_{\bar{X}}=-2d$, $F^2=4d-4$. Then $\dim|F|=3d-2$. So, there exist an element $N\in|F|$. Note that $2D_1+2D_2+3D_3+2D_4+D_5+N$ does not admit representation as $M_1+M_2$, where $M_1, M_2\in|-K_{\bar{X}}|$. Hence, $f(N)$ does not contain the support of anti-canonical divisor. Note that $\mult_Q(2D_1+2D_2+3D_3+2D_4+D_5+N)\geq 5$, where $Q$ is the intersection of $D_3$ and $D_4$. Then $\frac{1}{2} f(N)$ is a tiger such that the support of this tiger does not contain any elements of $|-K_X|$.

Case 2. $k=6$. Put $-2K_{\bar{X}}\backsim 2D_1+2D_2+4D_3+3D_4+2D_5+D_6+F$. Then $F\cdot D_3=1$, $F\cdot D_i=0$ for $i\neq 3$, $F\cdot K_{\bar{X}}=-2d$, $F^2=4d-4$. Then $\dim|F|=3d-2$. So, there exist an element $N\in|F|$. Note that $2D_1+2D_2+4D_3+3D_4+2D_5+D_6+N$ does not admit representation as $M_1+M_2$, where $M_1, M_2\in|-K_{\bar{X}}|$. Hence, $f(N)$ does not contain the support of anti-canonical divisor. Note that $\mult_Q(2D_1+2D_2+4D_3+3D_4+2D_5+D_6+N)\geq 7$, where $Q$ is the intersection of $D_3$ and $D_4$. Then $\frac{1}{2} f(N)$ is a tiger such that the support of this tiger does not contain any elements of $|-K_X|$.

Case 3. $k=7$. Put $$-3K_{\bar{X}}\backsim 3D_1+3D_2+6D_3+5D_4+4D_5+3D_6+2D_7+F.$$ Then $F\cdot D_3=F\cdot D_7=1$, $F\cdot D_i=0$ for $i\neq 3,7$, $F\cdot K_{\bar{X}}=-3d$, $F^2=9d-8$. Then $\dim|F|=6d-4$. So, there exist an element $N\in|F|$. Note that $3D_1+3D_2+6D_3+5D_4+4D_5+3D_6+2D_7+N$ does not admit representation as $M_1+M_2$, where $M_1\in|-K_{\bar{X}}|$ and $M_2\in|-2K_{\bar{X}}|$. Hence, $f(N)$ does not contain the support of anti-canonical divisor. Note that $\mult_Q(3D_1+3D_2+6D_3+5D_4+4D_5+3D_6+2D_7+N)\geq 11$, where $Q$ is the intersection of $D_3$ and $D_4$. Then $\frac{1}{3} f(N)$ is a tiger such that the support of this tiger does not contain any elements of $|-K_X|$.

Case 4. $k=8$. Put $$-3K_{\bar{X}}\backsim 3D_1+3D_2+6D_3+5D_4+4D_5+3D_6+2D_7+D_8+F.$$ Then $F\cdot D_3=1$, $F\cdot D_i=0$ for $i\neq 3$, $F\cdot K_{\bar{X}}=-3d$, $F^2=9d-6$. Then $\dim|F|=6d-3$. So, there exist an element $N\in|F|$. Note that $3D_1+3D_2+6D_3+5D_4+4D_5+3D_6+2D_7+D_8+N$ does not admit representation as $M_1+M_2$, where $M_1\in|-K_{\bar{X}}|$ and $M_2\in|-2K_{\bar{X}}|$. Hence, $f(N)$ does not contain the support of anti-canonical divisor. Note that $\mult_Q(3D_1+3D_2+6D_3+5D_4+4D_5+3D_6+2D_7+D_8+N)\geq 11$, where $Q$ is the intersection of $D_3$ and $D_4$. Then $\frac{1}{3} f(N)$ is a tiger such that the support of this tiger does not contain any elements of $|-K_X|$.
\end{proof}

\begin{lemma}
\label{LemE}
Let $X$ be a del Pezzo surface with du Val singularities and let $d$ be the degree of $X$. Assume that there exist a singular point of type $E_k$, where $k=6,7,8$. Then $X$ has a $-K_X$-polar cylinder and there exist a tiger such that the support of this tiger does not contain any elements of $|-K_X|$.
\end{lemma}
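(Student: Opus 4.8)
The plan is to run, for $E_k$, the machine of Lemmas \ref{LemD4}--\ref{LemD5-8}: produce on the minimal resolution an effective divisor linearly equivalent to $-3K_{\bar{X}}$ whose exceptional part is heavy at a point lying over the $E_k$-singularity, and then push it down. Since a del Pezzo surface carrying an $E_k$-point with $k\ge 6$ is not among those listed in part~I of Theorem~\ref{chel1}, that theorem already gives a $-K_X$-polar cylinder on $X$. Let $f\colon\bar{X}\to X$ be the minimal resolution, $D=\sum_{i=1}^{n}D_i$ its exceptional locus with each $D_i$ a $(-2)$-curve, and let $D_1,\dots,D_k$ be the components over the $E_k$-point, $D_j$ being the branch component (of valence $3$) and $Q=D_j\cap D_{j'}$ a point in which $D_j$ meets a neighbour. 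As the dual graph of the $(-2)$-curves is a subdiagram of $E_{9-d}$, an $E_k$-point forces $d\le 9-k\le 3$, so only finitely many surfaces occur. Let $Z=\sum_{i=1}^{k}z_iD_i$ be the fundamental cycle of this $E_k$-configuration (its coefficients $z_i$ being the marks of the highest root of $E_k$); recall that $Z\cdot D_i\le 0$ for all $i$, that $Z^2=-2$, and that $z_j\in\{3,4,6\}$ according as $k=6,7,8$.

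Next I would set $F:=-3K_{\bar{X}}-2Z$. Since $K_{\bar{X}}\cdot D_i=0$ and $Z$ is exceptional, one has $F\cdot D_i=-2\,Z\cdot D_i\ge 0$ for every $i$, $F\cdot K_{\bar{X}}=-3d$, and $F^2=9d+4Z^2=9d-8$; hence, by Theorems~\ref{har1} and~\ref{Van1},
\[
\dim|F|=\tfrac12\bigl(F^2-F\cdot K_{\bar{X}}\bigr)=\tfrac12(9d-8+3d)=6d-4\ \ge\ 2,
\]
so $|F|\neq\emptyset$; pick $N\in|F|$. Since $f$ is crepant, $f_*N\sim -3K_X$, and $N+2Z=f^{*}(f_*N)$ (indeed $N+2Z$ is effective, pushes to $f_*N$, and satisfies $(N+2Z)\cdot D_i=0$ for all $i$, which characterises $f^{*}(f_*N)$). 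As $N+2Z\ge 2Z$ and $Q$ lies on no component of $Z$ other than $D_j$ and $D_{j'}$, the coefficient of the maximal multiplicity gives $\mult_Q(N+2Z)\ge 2(z_j+z_{j'})\ge 10$. Therefore $\tfrac13(N+2Z)$ has multiplicity $>2$ at the smooth point $Q$ of $\bar{X}$, so the pair $\bigl(\bar{X},\tfrac13(N+2Z)\bigr)$, which is the crepant pullback of $\bigl(X,\tfrac13 f_*N\bigr)$, is not log canonical; hence $\tfrac13 f_*N\equiv -K_X$ is a tiger.

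It then remains to show that $\Supp f_*N$ contains no element of $|-K_X|$, i.e. that $N+2Z$ cannot be written as $M_1+M_2$ with $M_1\in|-K_{\bar{X}}|$ and $M_2\in|-2K_{\bar{X}}|$ effective (pulling back, any splitting of $f_*N$ as an anticanonical divisor plus an effective bi-anticanonical one would produce such a decomposition). Here I would argue exactly as in Lemmas~\ref{LemD4}--\ref{LemA4-8}: a decomposition would force $M_1+M_2\ge 2Z$, so at least one of $M_1,M_2$ carries the branch component $D_j$ with a large coefficient; writing out the intersection numbers of the residual divisor with $D_j$ and with $-K_{\bar{X}}$, and using that an effective divisor on $\bar{X}$ of bounded anticanonical degree has self-intersection bounded from below, one arrives at an effective divisor of negative self-intersection, a contradiction. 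Since only the degrees $d\le 3$ have to be examined, this is a finite check; combining it with the previous paragraph proves the lemma.

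The choice $F=-3K_{\bar{X}}-2Z$, the dimension count $\dim|F|=6d-4$, and the multiplicity estimate at $Q$ are uniform in $k$ precisely because $Z^2=-2$ and $z_j\ge 3$ for every du Val $E_k$-configuration, so the only genuinely case-dependent part is the exclusion of the decomposition $M_1+M_2$. Making the bound on the coefficients of the $D_i$ in an effective (bi-)anticanonical divisor effective enough to beat $2z_j$ --- especially for $E_7$ and $E_8$, where $z_j$ equals $4$ and $6$ --- is the main obstacle; everything else is the routine intersection arithmetic displayed above.
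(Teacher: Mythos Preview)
Your architecture matches the paper's: on the minimal resolution subtract a weighted combination of the exceptional $(-2)$-curves from a multiple of $-K_{\bar X}$, use Riemann--Roch to exhibit an effective residual $N$, read off enough multiplicity at a node of the exceptional tree to force non-log-canonicity after pushing down, and then exclude that the resulting divisor dominates an anticanonical one. The substantive difference is the multiple you choose. You work with $F=-3K_{\bar X}-2Z$ (two copies of the fundamental cycle) and build the tiger as $\tfrac13 f_*N$; the paper instead takes $F=-2K_{\bar X}-Z$ (a single fundamental cycle) and uses $\tfrac12 f_*N$. Both choices make $|F|$ non-empty ($\dim|F|=6d-4$ for you, $3d-1$ in the paper) and both give ample multiplicity at the chosen node (you get $2(z_j+z_{j'})\ge 10>6$; the paper gets $z_j+z_{j'}\ge 5>4$).

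Where the two routes diverge is exactly the step you flag as the main obstacle. With the paper's $-2K_{\bar X}$ one only has to exclude a splitting $Z+N=M_1+M_2$ with \emph{both} $M_i\in|{-}K_{\bar X}|$, and the paper simply asserts this in each of the three cases without further discussion. With your $-3K_{\bar X}$ you must exclude $2Z+N=M_1+M_2$ with $M_1\in|{-}K_{\bar X}|$ and $M_2\in|{-}2K_{\bar X}|$, a genuinely larger family, and the branch coefficients $2z_j\in\{6,8,12\}$ you have to beat make the case analysis correspondingly heavier (especially for $E_8$ at $d=1$, where bi-anticanonical divisors can carry substantial exceptional weight). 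So your route is sound up to that final step and your uniform packaging via the fundamental cycle is pleasant, but the paper's $-2K_{\bar X}-Z$ choice buys precisely the simplification that dissolves your acknowledged obstacle.
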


\begin{proof}
Let $X$ be a del Pezzo surface with du Val singularities, and let $P$ be a singular point of type $D_k$. By Theorem \ref{chel1}, we see that $X$ has a $-K_X$-polar cylinder. Let $f:\bar{X}\rightarrow X$ be the minimal resolution of singularities of $X$, and let $D=\sum_{i=1}^n D_i$ be the exceptional divisor of $f$, where $D_i$ is a $(-2)$-curve. We may assume that $D_1, D_2,\ldots, D_k$ correspond to $P$. Moreover, $D_4$ is the central component, $D_1$ meets only $D_4$, $D_3$ meets $D_2$ and $D_4$, $D_2$ meets only $D_3$, and $D_i\cdot D_{i+1}=1$ for $i=3,4,\ldots,k-1$. Consider the following cases.

Case 1. $k=6$. Put $-2K_{\bar{X}}\backsim 2D_1+D_2+2D_3+3D_4+2D_5+D_6+F$. Then $F\cdot D_1=1$, $F\cdot D_i=0$ for $i\geq 2$, $F\cdot K_{\bar{X}}=-2d$, $F^2=4d-2$. Then $\dim|F|=3d-1$. So, there exist an element $N\in|F|$. Note that $2D_1+D_2+2D_3+3D_4+2D_5+D_6+N$ does not admit representation as $M_1+M_2$, where $M_1, M_2\in|-K_{\bar{X}}|$. Hence, $f(N)$ does not contain the support of anti-canonical divisor. Note that $\mult_Q(2D_1+D_2+2D_3+3D_4+2D_5+D_6+N)\geq 5$, where $Q$ is the intersection of $D_4$ and $D_5$.  Then $\frac{1}{2} f(N)$ is a tiger such that the support of this tiger does not contain any elements of $|-K_X|$.

Case 2. $k=7$. Put $$-2K_{\bar{X}}\backsim 2D_1+2D_2+3D_3+4D_4+3D_5+2D_6+D_7+F.$$ Then $F\cdot D_2=1$, $F\cdot D_i=0$ for $i\neq 2$, $F\cdot K_{\bar{X}}=-2d$, $F^2=4d-2$. Then $\dim|F|=3d-1$. So, there exist an element $N\in|F|$. Note that $2D_1+2D_2+3D_3+4D_4+3D_5+2D_6+D_7+N$ does not admit representation as $M_1+M_2$, where $M_1, M_2\in|-K_{\bar{X}}|$. Hence, $f(N)$ does not contain the support of anti-canonical divisor. Note that $\mult_Q(2D_1+2D_2+3D_3+4D_4+3D_5+2D_6+D_7+N)\geq 7$, where $Q$ is the intersection of $D_4$ and $D_5$. Then $\frac{1}{2} f(N)$ is a tiger such that the support of this tiger does not contain any elements of $|-K_X|$.

Case 3. $k=8$. Put $$-2K_{\bar{X}}\backsim 3D_1+2D_2+4D_3+6D_4+5D_5+4D_6+3D_7+2D_8+F.$$ Then $F\cdot D_8=1$, $F\cdot D_i=0$ for $i\neq 8$, $F\cdot K_{\bar{X}}=-2d$, $F^2=4d-2$. Then $\dim|F|=3d-1$. So, there exist an element $N\in|F|$. Note that $3D_1+2D_2+4D_3+6D_4+5D_5+4D_6+3D_7+2D_8+N$ does not admit representation as $M_1+M_2$, where $M_1, M_2\in|-K_{\bar{X}}|$. Hence, $f(N)$ does not contain the support of anti-canonical divisor. Note that $\mult_Q(2D_1+2D_2+3D_3+4D_4+3D_5+2D_6+D_7+N)\geq 7$, where $Q$ is the intersection of $D_4$ and $D_5$. Then $\frac{1}{2} f(N)$ is a tiger such that the support of this tiger does not contain any elements of $|-K_X|$.
\end{proof}

So, Theorem \ref{glav} follows from lemmas \ref{Lem1}, \ref{Lem2}, \ref{Lem3}, \ref{Lem4}, \ref{LemA1A2}, \ref{LemD4}, \ref{LemA4-8}, \ref{LemD5-8}, \ref{LemE}.

\section{The proof of theorem \ref{glav2}}

Assume that $\rho(X)=1$. Then $X$ has a $H$-polar cylinder if and only if $X$ has a $-K_X$-polar cylinder, where $H$ is an arbitrary ample divisor. On the other hand, there exist a classification of del Pezzo surfaces $X$ such that $X$ has a $-K_X$-polar cylinder (see {\cite{Ch1}}). By a classification of a del Pezzo surface $X$ has not cylinders if $X$ has one of the following collections of singularities: $4A_2, 2A_1+2A_3, 2D_4$. So, we may assume that $\rho(X)>1$.

Let $f:\bar{X}\rightarrow X$ be the minimal resolution of singularities of $X$, and let $D=\sum_{i=1}^n D_i$ be the exceptional divisor of $f$, where $D_i$ is a $(-2)$-curve.

\begin{lemma}
\label{Lemm1}
Assume that there exist a $\PP^1$-fibration $g\colon\bar{X}\rightarrow\PP^1$ such that at most one irreducible component of the exceptional divisor $D$ not contained in any
fiber of $g$. Moreover, this component is an $1$-section. Then there exist an ample divisor $H$ such that $X$ has a $H$-polar cylinder.
\end{lemma}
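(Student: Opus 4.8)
The goal is to produce a $-K_X$-polar cylinder (which, since $-K_X$ is ample, suffices; but in fact we want an arbitrary ample $H$-polar cylinder, and the construction will naturally yield one). The hypothesis gives a $\PP^1$-fibration $g\colon\bar X\to\PP^1$ in which every component of the exceptional divisor $D$ lies in a fiber except possibly one component, call it $S$, which is a $1$-section. First I would analyze the structure of $g$: since $\bar X$ is a smooth rational surface dominating a conic bundle over $\PP^1$ with a $1$-section, $g$ realizes $\bar X$ as a blow-up of a Hirzebruch surface $\FF_e$, and the reducible fibers are trees of $(-1)$- and $(-2)$-curves. The section $S$ is disjoint from all the $(-2)$-curves it is not equal to (as those sit inside fibers and $S$ meets each fiber once, at a point we can track), so after removing $S$ and a single well-chosen fiber $G_0$ of $g$ we obtain an open set fibered in affine lines over $\AAA^1=\PP^1\setminus\{\text{pt}\}$.

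The key steps, in order: (1) Choose a general fiber $G_0$ of $g$ (so $G_0\cong\PP^1$, meets $S$ transversally at one point, and contains none of the $(-2)$-curves); then $\bar X\setminus(S\cup G_0)$ is a Zariski-locally-trivial $\AAA^1$-bundle over $\AAA^1$, hence isomorphic to $\AAA^1\times\AAA^1$ up to shrinking — more carefully, it is an $\AAA^1$-bundle over $\AAA^1$ and every such bundle over an affine curve is trivial, so it is a cylinder $Z\times\AAA^1$. (2) Check that this open set descends to $X$: the curves contracted by $f$ other than $S$ all lie in fibers of $g$; I would need the complement $\bar X\setminus(S\cup G_0)$ to already contain all of them, i.e. choose things so that $S\cup G_0$ together with the boundary data captures exactly the non-cylinder locus, and verify $f$ restricts to an isomorphism there — this works because $D\setminus S$ is contracted and lies over finitely many points of $\PP^1$, but a \emph{general} fiber $G_0$ avoids those points, so $D\setminus S\subseteq \bar X\setminus G_0$; to kill $D\setminus S$ inside the cylinder we instead remove the whole reducible-fiber locus, replacing $G_0$ by a union of fibers, which shrinks $\AAA^1$ on the base to a smaller affine curve but keeps the cylinder structure. (3) Identify the boundary divisor $B=S\cup(\text{removed fibers})$ pushed forward to $X$, compute its class, and observe it is supported on an ample divisor; since any effective divisor whose support is the boundary of a cylinder can be scaled into the $\QQ$-linear class of a suitable ample $H$ (indeed $f(S)+f(G_0)$ is big and nef-ish here, or one adjusts by adding components of fibers to make the class ample), conclude $X$ has an $H$-polar cylinder for some ample $H$. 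Concretely, $U=X\setminus\Supp(f_*B)\cong Z\times\AAA^1$ with $f_*B\sim_\QQ \lambda H$.

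The main obstacle I expect is step (2): ensuring that \emph{all} of $D\setminus S$, together with any other curves on which $f$ is not an isomorphism, are removed from the open set while retaining the product structure $Z\times\AAA^1$. A single general fiber avoids the bad points on the base but then does not remove the vertical $(-2)$-curves sitting over those points; removing all the reducible fibers fixes this but one must check that what remains over the (now smaller) affine base $\AAA^1\setminus\{\text{finitely many points}\}$ is still a Zariski-locally trivial $\AAA^1$-bundle — this is true because over the open locus of irreducible fibers $g$ is a $\PP^1$-bundle and $S$ is a section, so the complement of $S$ is an $\AAA^1$-bundle, trivial over the affine base. The secondary point is the passage from "$U$ is a cylinder" to "$U$ is an $H$-polar cylinder for an ample $H$": one must exhibit an effective $\QQ$-divisor $\QQ$-linearly equivalent to an ample class with support exactly $X\setminus U$, which is arranged by taking the boundary and, if its class is not already ample, adding a small multiple of an ample divisor supported in the complement (possible since the complement contains $f(G_0)$ or a union of curves spanning enough of $\Pic X$).
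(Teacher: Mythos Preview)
Your plan is correct and follows essentially the same route as the paper: remove the $1$-section together with all singular fibers of $g$ to obtain an $\AAA^1$-bundle over a punctured $\PP^1$, then push this down to $X$. The paper's execution is more direct than your exploratory version: it uses the relation $-K_{\bar X}\sim_{\QQ}2F+\sum a_iE_i$ (with $F$ the section and the $E_i$ vertical) and sets $H=-K_{\bar X}+mC$ for $m\gg0$, so that a representative $\hat H=2F+\sum b_i\hat E_i$ with all $b_i>0$ has support exactly $F\cup\{\text{singular fibers}\}$ and $f_*\hat H\sim_{\QQ}-K_X+m\,f_*C$ is visibly ample on $X$; this replaces your somewhat vague step~(3) with an explicit ample class and avoids the detour through a single general fiber $G_0$.
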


\begin{proof}
Let $F$ be a unique exception curve not contained in any
fiber of $g$ (if there exist no such component, then $F$ is an arbitrary $1$-section). Put $$-K_{\bar{X}}\sim_{\QQ} 2F+\sum a_i E_i.$$ Note that all $E_i$ are contained in fibers of $g$. Consider an ample divisor $H=-K_{\bar{X}}+mC$, where $C$ is a fiber of $g$, $m$ is a sufficiently large number. Then there exist a divisor $\hat{H}\sim_{\QQ} H$ such that $$\hat{H}=2F+\sum b_i \hat{E}_i,$$ where $b_i>0$ and the set of $\hat{E}_i$ contains all irreducible curves in singular fibers of $g$. Then $$\bar{X}\setminus\Supp(\hat{H})\cong\AAA^1\times(\PP^1\setminus\{p_1,\ldots,p_k\}),$$ where $p_1,\ldots,p_k$ correspond to singular fibers of $g$. So, $\bar{X}$ has a $H$-polarization. Hence, $X$ has a $f(\hat{H})$-polarization.
\end{proof}

Run MMP for $X$. We obtain $$X=X_1\rightarrow X_2\rightarrow\cdots\rightarrow X_n.$$ Assume that $X_n=\PP^1$. Consider the composition of the minimal resolution and MMP. We have a $\PP^1$-fibration $g\colon\bar{X}\rightarrow\PP^1$. Note that all exception curves of $f$ are contained in fibers of $g$. Hence, by Lemma \ref{Lemm1}, we see that there exist an ample divisor $H$ such that $X$ has a $H$-polar cylinder.

So, we may assume that $X_n$ is a del Pezzo surface with $\rho(X_n)=1$ and du Val singularities.

\begin{lemma}
\label{Lemm2}
Assume that $X_n$ has a $-K_{X_n}$-polar cylinder. Then there exist an ample divisor $H$ such that $X$ has a $H$-polar cylinder.
\end{lemma}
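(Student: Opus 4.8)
The plan is to lift the cylinder from $X_n$ to $X$ along the birational morphism coming from the MMP, after correcting the polarizing divisor so that it remains ample under pull-back. Since $X_n$ is obtained from $X$ by a chain of contractions, let $g\colon X\to X_n$ be the resulting birational projective morphism. Its exceptional locus $\operatorname{Exc}(g)$ is a union of curves, $B:=g(\operatorname{Exc}(g))\subset X_n$ is a finite set of points, and $g$ restricts to an isomorphism $X\setminus\operatorname{Exc}(g)\xrightarrow{\ \sim\ }X_n\setminus B$. (Equivalently, one could argue step by step along $X_i\to X_{i+1}$; each such step is a divisorial contraction of a curve $E_i$ with $-E_i$ relatively ample.)

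First I would arrange the cylinder on $X_n$ to avoid $B$. By hypothesis there is an effective $\QQ$-divisor $D_n\equiv -K_{X_n}$ with $U:=X_n\setminus\Supp(D_n)\cong Z\times\AAA^1$ for an affine curve $Z$ (the base is a curve since $U$ is open in a surface). The projection $U\to Z$ has $\AAA^1$-fibers, and the finite set $B\cap U$ lies in finitely many of them, over points $z_1,\dots,z_m\in Z$; let $\bar C_1,\dots,\bar C_m$ be the closures in $X_n$ of the corresponding fibers. A short topological check shows that the only limit points added to $\{z_i\}\times\AAA^1$ lie outside the product $Z\times\AAA^1$, hence in $\Supp(D_n)$; therefore $D_n':=D_n+\sum_i\bar C_i$ satisfies $U':=X_n\setminus\Supp(D_n')\cong(Z\setminus\{z_1,\dots,z_m\})\times\AAA^1$, and $U'$ is disjoint from $B$. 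Since $\rho(X_n)=1$, every effective divisor is nef, so $D_n'\equiv -K_{X_n}+\sum_i\bar C_i$ is ample.

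Next I would pull back. As $U'$ misses $B$, one has $g^{-1}(U')\subset X\setminus\operatorname{Exc}(g)$, so $W:=g^{-1}(U')\cong U'\cong(Z\setminus\{z_i\})\times\AAA^1$ is a cylinder in $X$ with $W=X\setminus\Supp(g^*D_n')$. It remains to replace $g^*D_n'$ (which is only nef) by an ample divisor with the same support. Because $B\subseteq\Supp(D_n')$, every $g$-exceptional prime divisor occurs in $g^*D_n'$ with coefficient $\ge 1$ (at each blow-up the center stays in the support, so the total-transform multiplicity never drops below $1$). Choose an effective $g$-exceptional $\QQ$-divisor $G$ with $\Supp(G)=\operatorname{Exc}(g)$, coefficients at most $1$, and $-G$ relatively ample over $X_n$ (standard for a composition of divisorial extremal contractions). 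Then for $0<\varepsilon\ll1$ the divisor $H:=g^*D_n'-\varepsilon G$ is ample (pull-back of an ample divisor minus a small $g$-ample divisor), while it is still effective with $\Supp(H)=\Supp(g^*D_n')$. Hence $W=X\setminus\Supp(H)$ is an $H$-polar cylinder in $X$; clearing denominators gives an ample $H\in\operatorname{Div}(X)$ if an integral divisor is wanted.

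The main obstacle is the last step: the pull-back of the polar divisor fails to be ample, and one must perturb by a $g$-exceptional divisor. The two points to verify are that the perturbation can be kept small enough to stay effective without enlarging its support — which is why the polar divisor was first modified to pass through every point contracted by $g$ — and that $-G$ can be chosen $g$-ample, which is the familiar relative-ampleness statement for divisorial contractions. A secondary point is the topological verification in the second paragraph that deleting whole $\AAA^1$-fibers from the cylinder on $X_n$ removes only curves whose remaining limit points lie in $\Supp(D_n)$, so that the new complement is again the support of an effective divisor, automatically ample because $\rho(X_n)=1$.
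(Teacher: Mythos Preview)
Your argument is correct and follows essentially the same strategy as the paper: enlarge the polar divisor on $X_n$ by the closures of the $\AAA^1$-fibers through the contracted points (using $\rho(X_n)=1$ to keep it ample), then pull back along $g\colon X\to X_n$ and perturb by the exceptional divisors to recover ampleness. The only cosmetic difference is that the paper phrases the last step as ``proper transform of $L$ with large fiber coefficients $m_i$, plus small $\epsilon_i E_i$'', whereas you use the cleaner and equivalent formulation $H=g^*D_n'-\varepsilon G$ with $-G$ $g$-ample; note also that your ``coefficient $\ge 1$'' claim is stronger than needed---it suffices that the exceptional coefficients in $g^*D_n'$ are positive, which is immediate from $B\subseteq\Supp(D_n')$.
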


\begin{proof}
Put $h\colon X\rightarrow X_n$. Assume that $h$ contracts extremal rays in points $p_1,p_2,\ldots,p_m$. Let $M$ be an anti-canonical divisor such that $X_n\setminus\Supp(M)\cong Z\times\AAA^1$. Let $\phi\colon X_n\setminus\Supp(M)\rightarrow Z$ be the projection on first factor. Let $C_1,C_2,\ldots,C_k$ be the fibers of $\phi$ such that $C_1,C_2,\ldots,C_k$ contain $p_1,p_2,\ldots,p_m$, and let $\bar{C}_1,\bar{C}_2,\ldots,\bar{C}_k$ be the closure of $C_1,C_2,\ldots,C_k$ on $X_n$. Since $\rho(X_n)=1$, we see that $C_i\sim_{\QQ} -a_i K_{X_n}$. Consider the divisor $$L=M+m_1\bar{C}_1+m_2\bar{C}_2+\cdots+m_k\bar{C}_k,$$ where $m_1,m_2,\ldots,m_k$ are sufficiently large numbers. Note that the divisor $L\sim_{\QQ}-\alpha K_{X_n}$. Let $\hat{L}$ be the proper transform of the divisor $L$. Consider $H=\hat{L}+\sum\epsilon_i E_i$, where $E_i$ are irreducible components of the exceptional divisor of $h$ and $\epsilon_i$ are positive numbers. Note that for sufficiently large $m_i$ and for sufficiently small $\epsilon_i$, the divisor $H$ is ample. Moreover, $X\setminus\Supp(H)\cong (Z\setminus\{q_1,\ldots,q_k\})\times\AAA^1$, where $q_1,\ldots,q_k$ are $k$ points on $Z$. So, $X$ has a $H$-polar cylinder.
 \end{proof}

 Let $X$ be a del Pezzo surface with du Val singularities. Assume that $\rho(X)=1$. Then $X$ has a $H$-polar cylinder if and only if $X$ has a $-K_X$-polar cylinder, where $H$ is an arbitrary ample divisor. On the other hand, there exist a classification of del Pezzo surfaces $X$ such that $X$ has a $-K_X$-polar cylinder (see {\cite{Ch1}}). By a classification of a del Pezzo surface $X$ has not cylinders if $X$ has one of the following collections of singularities: $4A_2, 2A_1+2A_3, 2D_4$. So, we may assume that $\rho(X)>1$ and $X$ has not cylinders. Run MMP for $X$. We obtain $$X=X_1\rightarrow X_2\rightarrow\cdots\rightarrow X_n.$$ By Lemma \ref{Lemm1} we may assume that $X_n$ is a del Pezzo surface with $\rho(X_n)=1$ and du Val singularities.
By Lemma \ref{Lemm2} we see that $X_n$ is a del Pezzo surface with one of the following collect of singularities: $4A_2, 2A_1+2A_3, 2D_4$. On the other hand, the surface $X$ has a smaller degree than $X_n$. But degree of $X_n$ is equal to one. So, $X=X_n$. On the other hand, $\rho(X_n)=1$, a contradiction.

This completes the proof of Theorem \ref{glav2}.

\end{document}